\newcommand\Cstrike{\bgroup\markoverwith{\textcolor{magenta}{\rule[0.5ex]{2pt}{1pt}}}\ULon}
\newcommand\Gstrike{\bgroup\markoverwith{\textcolor{ForestGreen}{\rule[0.5ex]{2pt}{1pt}}}\ULon}
\newcommand\Bstrike{\bgroup\markoverwith{\textcolor{blue}{\rule[0.5ex]{2pt}{1pt}}}\ULon}
\tikzset{%
	treenode/.style = {shape=rectangle, rounded corners,%
		draw, align=center,%
		top color=white, bottom color=blue!20},%
	root/.style     = {treenode, font=\Large, bottom color=red!30},%
	env/.style      = {treenode, font=\ttfamily\normalsize},%
	dummy/.style    = {circle,draw,inner sep=0pt,minimum size=2mm}%
}%
\DeclareMathAlphabet{\mathbbe}{U}{bbold}{m}{n}
\setlist{}
\newtheorem{thm}{Theorem}[section]
\newtheorem{prop}[thm]{Proposition}
\newtheorem{cor}[thm]{Corollary}
\theoremstyle{definition}
\newtheorem{defn}[thm]{Definition}
\newtheorem{ex}[thm]{Example}
\newtheorem{obs}[thm]{Observation}
\newtheorem{notation}[thm]{Notation}
\theoremstyle{remark}
\let\c@equation\c@thm
\numberwithin{equation}{section}
\newcommand{\id}{\mathrm{id}}
\newcommand{\op}{\mathrm{op}}
\newcommand{\Ho}{\mathrm{Ho}}
\newcommand{\heq}{\mathrm{heq}}
\newcommand{\Mat}{\mathrm{Mat}}
\newcommand{\GL}{\mathrm{GL}}
\newcommand{\const}{\mathrm{const}}
\newcommand{\cC}{\mathcal{C}}
\newcommand{\cD}{\mathcal{D}}
\newcommand{\cE}{\mathcal{E}}
\newcommand{\cG}{\mathcal{G}}
\newcommand{\defeq}{\mathrel{:=}}
\newcommand{\eqdef}{\mathrel{=:}}
\newcommand{\Hom}{\text{Hom}}
\newcommand{\Map}{\text{Map}}
\newcommand{\map}{\text{map}}
\newcommand{\nerve}{\text{nerve}}
\newcommand{\iso}{\text{iso}}
\newcommand{\ob}{\text{ob}}
\newcommand{\ev}{\text{ev}}
\newcommand{\comma}{\text{,}}
\newcommand{\Set}{\texttt{Set}}
\newcommand{\SSet}{\texttt{SSet}}
\newcommand{\SSpace}{\texttt{SSpace}}
\newcommand{\FinSet}{\texttt{FinSet}}
\newcommand{\FinVect}{\texttt{Vect}}
\def\makeslashed#1#2#3#4#5{#1{\mathpalette{\sla@{#2}{#3}{#4}}{#5}}}
\def\@mathlower#1#2#3{\setbox0=\hbox{$\m@th#2#3$}\lower#1\ht0\box0}
\def\mathlower#1#2{\mathpalette{\@mathlower{#1}}{#2}}
\definecolor{maroon}{rgb}{0.5,0,0}
\definecolor{violet}{rgb}{0.3,0,0.7}
\begin{document}

	\title[Decomposing the classifying diagram]{Decomposing the classifying diagram in terms of classifying spaces of groups}
	\author[Osborne]{Christina Osborne}


	\address{Department of Mathematics\\Ohio State University\\100 Math Tower\\231 West 18th Avenue\\Columbus OH, 43210-1174}
	\email{osborne.475@osu.edu}

	\date{\today}

	\begin{abstract}

	
	The classifying diagram was defined by Rezk and is a generalization of the nerve of a category; in contrast to the nerve, the classifying diagram of two categories is equivalent if and only if the categories are equivalent. In this paper we prove that the classifying diagram of any category is characterized in terms of classifying spaces of stabilizers of groups.  We also prove explicit decompositions of the classifying diagrams for the categories of finite ordered sets, finite dimensional vector spaces, and finite sets in terms of classifying spaces of groups. 
	
	\end{abstract}

	\maketitle
	
	\setcounter{secnumdepth}{2}
	\setcounter{tocdepth}{2}
	\tableofcontents
	\thispagestyle{myheadings}

	\section{Introduction}
	
	A topological space can be built from a category using the machinery of the nerve, which takes objects in the category to points and chains of $n$-composable morphisms to $n$-cells.
	The resulting space is referred to as the ``classifying space" of the category.  However, two categories that are not equivalent can produce equivalent classifying spaces because the nerve does not place any value on the data that comes from a morphism being an isomorphism. (See Example \ref{example_nerve_trivandnontrivcats} below.)
	
	The classifying diagram, which is a generalization of the nerve, is an alternative machine that can be used to study categories.
	The classifying diagram respects the data that isomorphisms provide, and as a result, the classifying diagram of two categories are equivalent if and only if the categories are equivalent \cite[3.3.4]{Bergner_book}. The classifying diagram was defined by Rezk in \cite{Rezk}. Rezk uses complete Segal spaces as a model for homotopy theory. The classifying diagram of a category is a natural occurrence of a complete Segal space. (See Proposition \ref{Prop_Classfying_Diagram_compl_Segal_space} below.)
	
	The purpose of this paper is to provide a deeper understanding of the classifying diagram.  We consider specific well-known categories, such as the category of finite ordered sets, finite dimensional vector spaces, and finite sets; we describe the resulting classifying diagrams in terms of classifying spaces of groups.  In the process, we prove the valuable result that for a general category we can characterize the classifying diagram in terms of classifying spaces of stabilizers.

	\subsection{Organization of the paper}
	
	We begin in Section \ref{Background} by recalling relevant category theory tools, the definition and basic structure of simplicial sets, the nerve of a category, and the definition of complete Segal spaces. In Section \ref{Classifying_Diagram_section}, we provide Rezk's definition for the classifying diagram of a category, explain why the classifying diagram is a complete Segal space, provide some preliminary examples of the classifying diagram, and prove a characterization of the classifying diagram using stabilizers of groups. In Sections \ref{Section_FinVect} and \ref{Section_FinSet} we address the classifying diagram of the categories of finite dimensional vector spaces and finite sets, respectively.

	\subsection{Acknowledgments}\label{Acknowlegments}
	
	 The content of this paper comes from my Ph.D. thesis \cite{Osborne}, which was completed at the University of Virginia. I want to thank my advisor, Julie Bergner, for her mentorship and encouragement throughout this project.  I also want to thank Nick Kuhn and Tom Mark for their helpful conversations and feedback.

	\section{Background}\label{Background}
	
	In this section we review some relevant category theory tools and provide an overview of simplicial sets, simplicial spaces, and complete Segal spaces.
	
	

	\subsection{Category theory tools}\label{NatTrans_as_a_functor}
	
	We recall some of the category theory tools that will be used throughout this paper. In particular, we review definitions and relevant results for natural transformations, equivalent categories, and diagram categories.

	The data from a natural transformation $\eta:F\Rightarrow G$ between two functors $F,G:\cC \rightarrow \cD$ can be equivalently packaged 
	as a functor $\eta: \cC \times \{0\rightarrow 1\} \rightarrow \cD$. First, recall the definition of a natural transformation.
	
	\begin{defn}
		A \emph{natural transformation} $\eta:F\Rightarrow G$ is a function that assigns to each object $c$ in $\cC$ a morphism $\eta_c:F(c)\rightarrow G(c)$ of $\cD$ in such a way that for every morphism $f:c\rightarrow c'$ of $\cC$, the diagram
		\[ 
		\begin{tikzcd}
		F(c) \ar[r, "\eta_c"] \ar[d, "F(f)"'] & G(c) \ar[d, "G(f)"] \\ F(c') \ar[r, "\eta_{c'}"] & G(c')
		\end{tikzcd}
		\]
		commutes.
	\end{defn}
	
	Instead of using the labeling for the category with two objects and one nontrivial morphism $\{0\rightarrow 1\}$, we suggestively use $\{F\xrightarrow{\eta} G\}$. First we show that given a natural transformation $\eta: F\Rightarrow G$, we can obtain a functor $\{F\xrightarrow{\eta} G\} \times \cC \rightarrow \cD$.  We define the evaluation functor $\ev:\{F\xrightarrow{\eta} G\} \times \cC \rightarrow \cD$ by $\ev (F,c)=F(c)$ and $\ev (G,c)=G(c)$.  The diagram
	\[ 
	\begin{tikzcd}
	(F,c) \ar[r, "(\eta \comma \id_c)"] \ar[d, "(\id_F \comma f)"'] & (G,c) \ar[d, "(\id_G \comma f)"] \\ (F,c') \ar[r, "(\eta\comma \id_{c'})"] & (G,c')
	\end{tikzcd}
	\]
	commutes in the category $\{F\xrightarrow{\eta} G\}\times \cC$.  Since functors preserve composition, applying the evaluation functor $\ev$ to the above diagram gives us the same diagram from the definition of natural transformations.  In this manner, we obtain the functor $\ev:\{F\xrightarrow{\eta} G\} \times \cC \rightarrow \cD$ 
	from a natural transformation $\eta:F\Rightarrow G$.
	
	The converse is also true.  Meaning, given functors $F,G:\cC \to \cD$ and $\ev\colon\{F\xrightarrow{\eta} G\} \times \cC \rightarrow \cD$ where $\ev (F,c)=F(c)$ and $\ev (G,c)=G(c)$, then we obtain a natural transformation $\eta:F\Rightarrow G$. To see that the converse is true, let $f:c\to c'$ be a morphism in $\cC$. Then the square
	\[ 
	\begin{tikzcd}
	(F,c) \ar[r, "(\eta \comma \id_c)"] \ar[d, "(\id_F \comma f)"'] & (G,c) \ar[d, "(\id_G \comma f)"] \\ (F,c') \ar[r, "(\eta\comma \id_{c'})"] & (G,c')
	\end{tikzcd}
	\]
	commutes in the category $\{F\xrightarrow{\eta} G\} \times \cC$. Applying the functor $\ev$ to the above diagram gives the commutative diagram
	\[ 
	\begin{tikzcd}
	F(c) \ar[r, "\ev(\eta \comma \id_c)"] \ar[d, "F(f)"'] & (G,c) \ar[d, "G(f)"] \\ (F,c') \ar[r, "\ev(\eta\comma \id_{c'})"] & (G,c')
	\end{tikzcd}
	\]
	in $\cD$. Thus if we define a function that assigns to each object $c$ of $(\cC)$ the morphism $\eta_c\defeq \ev(\eta,id_c):F(c)\to G(c)$ in $\cD$, we construct a natural transformation $\eta:F\Rightarrow G$.
	
	For functors $F,G:\cC\to\cD$, we say that a natural transformation $\eta: F\Rightarrow G$ is a \emph{natural isomorphism} if the morphism $\eta_c:F(c)\to G(c)$ in $\cD$ is an isomorphism for every object $c$ of $\cC$.
	
	\begin{defn}
		Categories $\cC$ and $\cD$ are \emph{equivalent categories} if there exist functors $F\colon\cC\to\cD$ and $G:\cD\to\cC$ as well as natural isomorphisms $G\circ F \cong id_{\cC}$ and $F\circ G \cong id_{\cD}$.
	\end{defn}

	We can alternatively determine if two categories are equivalent using the following definitions.
	
	\begin{defn}
		Let $F:\cC\to \cD$ be a functor.
		\begin{enumerate}
			\item If the function between hom-sets $F:\Hom_{\cC}(c,c') \to \Hom_{\cD}(F(c),F(c'))$ is injective for any objects $c,c'\in\cC$, then $F$ is \emph{faithful}.
			\item If the function between hom-sets $F:\Hom_{\cC}(c,c') \to \Hom_{\cD}(F(c),F(c'))$ is surjective for any objects $c,c'\in\cC$, then $F$ is \emph{full}.
			\item If for any object $d\in\cD$ there exists an object $c\in\cC$ such that there is an isomorphism $F(c)\xrightarrow{\cong} d$ in $\cD$, then $F$ is \emph{essentially surjective}.
		\end{enumerate}
	\end{defn}
	
	The following result says that there are necessary and sufficient requirements for a functor to define an equivalence of categories.
	
	\begin{prop}\cite[IV.4.1]{MacLane}
		The categories $\cC$ and $\cD$ are equivalent if and only if there exists a functor $F:\cC\to \cD$ that is faithful, full, and essentially surjective.
	\end{prop}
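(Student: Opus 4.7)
The plan is to prove both directions separately.

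For the forward direction, suppose $\cC$ and $\cD$ are equivalent via functors $F: \cC \to \cD$ and $G: \cD \to \cC$ together with natural isomorphisms $\eta: G\circ F \cong \id_\cC$ and $\epsilon: F\circ G \cong \id_\cD$. Essential surjectivity of $F$ is immediate: for any $d \in \cD$, the component $\epsilon_d: F(G(d)) \to d$ exhibits $G(d)$ as a preimage of $d$ up to isomorphism. For faithfulness, naturality of $\eta$ gives that any $f: c \to c'$ can be recovered as $f = \eta_{c'} \circ GF(f) \circ \eta_c^{-1}$, so $F(f) = F(g)$ forces $GF(f) = GF(g)$ and hence $f = g$. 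For fullness, given $h: F(c) \to F(c')$, I set $\tilde h := \eta_{c'} \circ G(h) \circ \eta_c^{-1}: c \to c'$ and verify that $F(\tilde h) = h$ by a diagram chase using naturality of $\epsilon$ applied to $h$.

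For the converse, I construct a quasi-inverse $G: \cD \to \cC$ to a given faithful, full, and essentially surjective $F$. Using the axiom of choice, essential surjectivity of $F$ lets me select for each $d \in \cD$ an object $G(d) \in \cC$ together with an isomorphism $\epsilon_d: F(G(d)) \xrightarrow{\cong} d$. For a morphism $h: d \to d'$ in $\cD$, full faithfulness of $F$ supplies a unique morphism $G(h): G(d) \to G(d')$ satisfying $F(G(h)) = \epsilon_{d'}^{-1} \circ h \circ \epsilon_d$. Uniqueness of these lifts together with functoriality of $F$ implies that $G$ preserves identities and composition, so $G$ is a functor; the $\epsilon_d$ then assemble into a natural isomorphism $\epsilon: F\circ G \cong \id_\cD$ by construction.

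To produce the other natural isomorphism, I define $\eta_c: c \to GF(c)$ as the unique morphism whose image under $F$ equals $\epsilon_{F(c)}^{-1}: F(c) \to FGF(c)$; this exists because $F$ is full, is unique because $F$ is faithful, and is itself an isomorphism because a fully faithful functor reflects isomorphisms. Naturality of $\eta$ then reduces, via faithfulness of $F$, to checking the equality $F(GF(f) \circ \eta_c) = F(\eta_{c'} \circ f)$ inside $\cD$, which unpacks directly from the defining equation for $G$ on morphisms and collapses to a tautology after cancelling $\epsilon_{F(c)} \circ \epsilon_{F(c)}^{-1}$.

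The main obstacle is the reverse direction: the construction of $G$ on morphisms hinges on the interplay of fullness and faithfulness to produce unique lifts, and these lifts must be tracked carefully to verify functoriality of $G$ and naturality of $\eta$. The axiom of choice is used in selecting the objects $G(d)$ and the isomorphisms $\epsilon_d$, which is a standard feature of this kind of construction.
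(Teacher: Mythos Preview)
The paper does not actually prove this proposition; it is stated with a citation to MacLane and no argument is given. So there is no paper proof to compare against, and your write-up is essentially the standard textbook argument.

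There is, however, a genuine gap in your fullness verification for the forward direction. You set $\tilde h := \eta_{c'} \circ G(h) \circ \eta_c^{-1}$ and assert that $F(\tilde h) = h$ follows from naturality of $\epsilon$ applied to $h$. But naturality of $\epsilon$ at $h$ only gives $FG(h) = \epsilon_{F(c')}^{-1} \circ h \circ \epsilon_{F(c)}$, and substituting this into $F(\tilde h) = F(\eta_{c'}) \circ FG(h) \circ F(\eta_c)^{-1}$ yields
\[
F(\tilde h) = F(\eta_{c'}) \circ \epsilon_{F(c')}^{-1} \circ h \circ \epsilon_{F(c)} \circ F(\eta_c)^{-1}.
\]
For this to collapse to $h$ you would need $F(\eta_c) = \epsilon_{F(c)}$ for every $c$, which is one of the triangle identities for an \emph{adjoint} equivalence and need not hold for an arbitrary pair of natural isomorphisms $(\eta,\epsilon)$. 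The fix is short: first observe that $G$ is faithful by the same argument you gave for $F$ (the roles of $F$ and $G$ are symmetric). Then use naturality of $\eta$, not $\epsilon$, to compute
\[
GF(\tilde h) = \eta_{c'}^{-1} \circ \tilde h \circ \eta_c = \eta_{c'}^{-1} \circ \bigl(\eta_{c'} \circ G(h) \circ \eta_c^{-1}\bigr) \circ \eta_c = G(h),
\]
and conclude $F(\tilde h) = h$ by faithfulness of $G$. Your reverse direction is correct as written.
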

	
	We can use categories to define new categories. If the collection of objects in the category $\cD$ form a set, then we say that $\cD$ is a \emph{small} category. 
	
	\begin{ex}
		Let $\cC$ be a category and let $\cD$ be a small category.
		\begin{enumerate}
			\item 	The \emph{opposite category} of $\cC$, denoted $\cC^{op}$, is the category where $\ob({\cC})=\ob({\cC^{op}})$ and $f^{op}:b\to a$ is a morphism in $\cC^{op}$ if and only if $f:a\to b$ is a morphism in $\cC$.
			\item The \emph{functor category} $\cC^\cD$ is the category whose objects are functors $F:\cD\to \cC$ and the morphisms are natural transformations. We also refer to $\cC^\cD$ as a \emph{diagram category}.
		\end{enumerate}
	\end{ex}
	
	Let $[n]$ be the category consisting of a chain of $n$ composable morphisms\[0\to 1 \to 2 \to \cdots \to n.\] The diagram category $\cC^{[n]}$ is of particular importance throughout this paper.  An object of $\cC^{[n]}$ is a chain of $n$ composable morphisms in $\cC$
	\[ 
	\begin{tikzcd}
	x_0 \ar[r, "f_1"] & x_1 \ar[r, "f_2"] & x_2  \ar[r, "f_3"] &  \cdots \ar[r, "f_n"] & x_n
	\end{tikzcd}
	\]
	and a morphism from $(f_1,\ldots, f_n)$ to $(g_1,\ldots,g_n)$ is an $(n+1)$-tuple of morphisms $(\alpha_0, \alpha_1, \ldots, \alpha_n)$, where each $\alpha_i$ is a morphism in $\cC$, making the diagram
	\[ 
	\begin{tikzcd}
	x_0 \ar[r, "f_1"] \ar[d, "\alpha_0"] & x_1 \ar[r, "f_2"] \ar[d, "\alpha_1"] & x_2  \ar[r, "f_3"] \ar[d, "\alpha_2"] &  \cdots \ar[r, "f_n"] & x_n \ar[d, "\alpha_n"]\\
	y_0 \ar[r, "f_1"] & y_1 \ar[r, "f_2"] & y_2  \ar[r, "f_3"] &  \cdots \ar[r, "f_n"] & y_n
	\end{tikzcd}
	\]
	commute in $\cC$. If each $\alpha_i$ is an isomorphism in $\cC$, then $(\alpha_0,\ldots,\alpha_n)$ is an isomorphism in $\cC^{[n]}$.
	
	\begin{prop}\label{Equiv_of_functor_cats}
		Let $\cD$ and $\cE$ be equivalent small categories. Also let $\cC$ be a category. Then the functor categories $\cC^{\cD}$ and $\cC^{\cE}$ are equivalent. 
	\end{prop}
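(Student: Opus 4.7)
The plan is to construct the equivalence of functor categories directly from the given equivalence data between $\cD$ and $\cE$ using precomposition. Fix functors $F\colon\cD\to\cE$, $G\colon\cE\to\cD$ together with natural isomorphisms $\eta\colon G\circ F \cong \id_\cD$ and $\epsilon\colon F\circ G\cong \id_\cE$. Define $F^*\colon\cC^\cE\to\cC^\cD$ on an object $H\colon\cE\to\cC$ by $F^*(H)=H\circ F$, and on a morphism $\alpha\colon H\Rightarrow H'$ by the whiskered natural transformation $\alpha F$ whose component at $d\in\cD$ is $\alpha_{F(d)}$. Define $G^*\colon\cC^\cD\to\cC^\cE$ symmetrically. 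Both assignments are functors because whiskering respects composition and identities.

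To show this pair is an equivalence, I would exhibit natural isomorphisms $G^*\circ F^*\cong \id_{\cC^\cE}$ and $F^*\circ G^*\cong \id_{\cC^\cD}$, matching the definition of equivalent categories given earlier. For an object $H$ of $\cC^\cE$, $(G^*\circ F^*)(H)=H\circ F\circ G$, and left-whiskering $\epsilon$ by $H$ gives a morphism $H\epsilon\colon H\circ F\circ G\Rightarrow H$ in $\cC^\cE$ with components $(H\epsilon)_e = H(\epsilon_e)$; these are isomorphisms in $\cC$ because each $\epsilon_e$ is an isomorphism in $\cE$ and functors preserve isomorphisms. So at every $H$ we obtain a candidate isomorphism in $\cC^\cE$.

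The main step is to verify that the family $\{H\epsilon\}_H$ is natural in $H$, that is, for every morphism $\alpha\colon H\Rightarrow H'$ in $\cC^\cE$ the square
\[
\begin{tikzcd}
H\circ F\circ G \ar[r, "\alpha FG"] \ar[d, "H\epsilon"'] & H'\circ F\circ G \ar[d, "H'\epsilon"] \\
H \ar[r, "\alpha"] & H'
\end{tikzcd}
\]
commutes in $\cC^\cE$. Checking this pointwise at $e\in\cE$, it reduces to the identity $\alpha_e\circ H(\epsilon_e)=H'(\epsilon_e)\circ \alpha_{FG(e)}$, which is precisely the naturality square of $\alpha$ applied to the morphism $\epsilon_e\colon FG(e)\to e$ in $\cE$. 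The analogous construction using $\eta$ yields the natural isomorphism $F^*G^*\cong \id_{\cC^\cD}$, completing the argument. The only obstacle is bookkeeping—keeping orientations of $\eta$, $\epsilon$, and the directions of whiskering consistent—but there is no conceptual barrier, since both $\eta$ and $\epsilon$ are freely invertible and the smallness of $\cD$ and $\cE$ ensures that the precomposition functors and whiskerings are well-defined on the functor categories.
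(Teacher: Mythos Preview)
Your proof is correct and follows essentially the same approach as the paper: both build the equivalence by precomposition, setting $F^*(H)=H\circ F$ and $G^*(K)=K\circ G$, and obtain the required natural isomorphisms by whiskering the given $\eta$ and $\epsilon$. The paper's argument is terser---it simply writes $f\circ(F\circ G)\cong f\circ\id_\cE=f$ without spelling out the action on morphisms or the naturality check---whereas you make the whiskering and the naturality-in-$H$ verification explicit, which is a welcome addition rather than a different route.
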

	
	\begin{proof}
		Let $F:\cD \to \cE$ and $G:\cE\to \cD$ be functors such that $G\circ F\cong \id_\cD$ and $F\circ G\cong \id_\cC$. Define $\overline{F}:\cC^\cE \to \cC^\cD$ by $\overline{F}(f)=f\circ F$ where $f$ is an object in $\cC^\cE$, and define $\overline{G}:\cC^\cD \to \cC^\cE$ by $\overline{G}(g)=g\circ G$ where $g$ is an object in $\cC^\cD$. Then
		\[\overline{G}\circ \overline{F}(f)=\overline{G}(f\circ F)=(f\circ F)\circ G= f\circ (F\circ G)\cong f\circ \id_\cE=f \]
		and similarly $\overline{F}\circ \overline{G}(g)\cong g$. Thus $\overline{F}$ and $\overline{G}$ define the desired equivalence of categories.
	\end{proof}
	
	\begin{prop}
		Let $\cC$ and $\cD$ be equivalent categories and let $\cE$ be a small category.  Then the functor categories $\cC^\cE$ and $\cD^\cE$ are equivalent.
	\end{prop}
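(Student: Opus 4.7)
The plan is to mimic the proof of Proposition \ref{Equiv_of_functor_cats} above, except that this time we post-compose with $F$ and $G$ rather than pre-compose. Let $F\colon \cC\to\cD$ and $G\colon\cD\to\cC$ witness the equivalence of $\cC$ and $\cD$, with natural isomorphisms $\alpha\colon G\circ F \cong \id_\cC$ and $\beta\colon F\circ G\cong \id_\cD$. I would define the candidate equivalences
\[
F_*\colon \cC^\cE\to\cD^\cE, \quad F_*(h)\defeq F\circ h, \qquad G_*\colon \cD^\cE\to\cC^\cE, \quad G_*(k)\defeq G\circ k,
\]
and on a morphism (i.e.\ natural transformation) $\eta\colon h\Rightarrow h'$ set $(F_*\eta)_e\defeq F(\eta_e)$, and similarly for $G_*$. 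Functoriality of $F_*$ and $G_*$ follows from functoriality of $F$ and $G$ together with the componentwise definition of vertical composition of natural transformations.

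Next I would construct the required natural isomorphisms $G_*\circ F_*\cong \id_{\cC^\cE}$ and $F_*\circ G_*\cong\id_{\cD^\cE}$. For each object $h\colon\cE\to\cC$ of $\cC^\cE$, whiskering $\alpha$ with $h$ gives a natural transformation $\alpha\ast h\colon G\circ F\circ h \Rightarrow h$ whose component at $e\in\cE$ is $\alpha_{h(e)}$; naturality of $\alpha\ast h$ in $\cE$ is immediate from naturality of $\alpha$ applied to the morphisms $h(f)$ for $f$ in $\cE$, and each component $\alpha_{h(e)}$ is an isomorphism because $\alpha$ is a natural isomorphism. Thus $\alpha\ast h$ is an isomorphism in $\cC^\cE$.

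The step most likely to need care is checking that the collection $\{\alpha\ast h\}_{h\in\ob(\cC^\cE)}$ is natural in $h$, i.e.\ that it defines a natural isomorphism of functors $G_*\circ F_*\Rightarrow \id_{\cC^\cE}$. Given a morphism $\eta\colon h\Rightarrow h'$ in $\cC^\cE$, I need the square
\[
\begin{tikzcd}
G\circ F\circ h \ar[r, "\alpha\ast h"] \ar[d, "G_*F_*\eta"'] & h \ar[d, "\eta"]\\
G\circ F\circ h' \ar[r, "\alpha\ast h'"'] & h'
\end{tikzcd}
\]
to commute in $\cC^\cE$. Evaluating at $e\in\cE$ reduces this to the naturality square of $\alpha$ applied to the morphism $\eta_e\colon h(e)\to h'(e)$ of $\cC$, which commutes because $\alpha$ is a natural transformation. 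Hence $\alpha$ induces the desired natural isomorphism $G_*\circ F_*\cong\id_{\cC^\cE}$. Exchanging the roles of $F$ and $G$ and using $\beta$ gives $F_*\circ G_*\cong\id_{\cD^\cE}$, completing the equivalence.
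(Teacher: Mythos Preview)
Your proof is correct and follows exactly the approach the paper intends: the paper simply states that the proof is ``similar to the proof of Proposition~\ref{Equiv_of_functor_cats}'', which amounts precisely to replacing pre-composition by post-composition as you do. If anything, you supply more detail than the paper's brief treatment by explicitly verifying the naturality of the whiskered isomorphisms $\alpha\ast h$ in $h$.
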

	
	The proof of the above proposition is similar to the proof of Proposition \ref{Equiv_of_functor_cats}.
	
	
	\subsection{Simplicial sets} \label{ModelCat_SimpSet}
	
	A brief account of the definition of simplicial sets and a description of its model category structure are included here. 
	
	
	Before we can provide the definition of a simplicial set, we define the categories $\Set$ and $\Delta$. Let $\Set$ denote the category whose objects are sets and morphisms are functions. Let $\Delta$ be the category whose objects are finite ordered sets, denoted as $[n]=\{0\leq 1\leq 2 \leq \cdots \leq n\}$, and the morphisms are order-preserving functions.
	
	\begin{defn}
		A \emph{simplicial set} is a functor from $\Delta^{op}\to \Set$.
	\end{defn}
	
	If $X$ is is a simplicial set, we denote its geometric realization by $\vert X \vert$ \cite[\S I.2]{GoerssJardine}. The category of simplicial sets, which we denote by $\SSet$, has simplicial sets as objects and natural transformations for morphisms.
	
	\begin{ex}
		The \emph{standard $n$-simplex} is the simplicial set $\Delta[n]\defeq\Hom_{\Delta}(-,[n])$. The geometric realization $\vert\Delta [n]\vert$ is an $n$-cell.
	\end{ex}
	
	The data from a simplicial set $X$ can be rewritten in terms of sets $X([n])\eqdef X_n$ along with maps
	\[
	\begin{tabular}{l l l}
	$d_i:X_n\to X_{n-1}$, & $0\leq i \leq n$ & (face maps)\\
	$s_j:X_n\to X_{n+1}$, & $0\leq j \leq n$ & (degenercy maps)
	\end{tabular}
	\]
	which satisfy simplicial identities \cite[\S I.1]{GoerssJardine}.
	We say that $X_n$ is the $n$th level of the simplicial set $X$. By the Yoneda Lemma, the set $X_n\cong \Hom_\SSet(\Delta[n],X)$.

	There is a model structure for $\SSet$ where a map $f:X\to Y$ is a weak equivalence if the induced map from geometric realization, $\vert f \vert:\vert X \vert \to \vert Y \vert$, is a weak homotopy equivalence \cite[Ch.~1]{GoerssJardine}. 
	An expository account for model categories may be found in \cite{DwyerSpalinski}. The following proposition says that the hom-sets in $\SSet$ have the structure of a simplicial set.
	
	\begin{prop} \cite[II.2.2]{GoerssJardine}
		The category $\SSet$ is enriched in $\SSet$. That is, given any two simplicial sets $X$ and $Y$, the hom-set $\Hom_{\SSet}(X,Y)$ is a simplicial set.
	\end{prop}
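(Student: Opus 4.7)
The plan is to give the simplicial structure on $\Hom_{\SSet}(X,Y)$ explicitly, by defining
\[
\Map(X,Y)_n \defeq \Hom_{\SSet}(X\times\Delta[n],\,Y)
\]
for each $n\geq 0$, and to show that the sets $\Map(X,Y)_n$ assemble into a functor $\Delta^{\op}\to\Set$. This is the natural definition suggested by the Yoneda lemma, which identifies $\Hom_{\SSet}(\Delta[n],Y)\cong Y_n$ and hence recovers the expected formula $\Map(\Delta[0],Y)_n \cong Y_n$.

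First I would observe that $\Delta[-]\colon\Delta\to\SSet$, sending $[n]\mapsto\Delta[n]$, is a functor: an order-preserving map $\theta\colon[m]\to[n]$ induces $\theta_*\colon\Delta[m]\to\Delta[n]$ by postcomposition. Taking the product with the fixed object $X$ yields a functor $X\times\Delta[-]\colon\Delta\to\SSet$, and then applying $\Hom_{\SSet}(-,Y)$ gives a functor $\Delta^{\op}\to\Set$, which by definition is a simplicial set. The face and degeneracy maps of $\Map(X,Y)$ are the operations
\[
d_i \defeq (\id_X\times d^i_*)^*, \qquad s_j \defeq (\id_X\times s^j_*)^*,
\]
obtained by precomposition with $\id_X$ times the standard cofaces $d^i\colon[n-1]\to[n]$ and codegeneracies $s^j\colon[n+1]\to[n]$.

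The next step is to verify that the simplicial identities hold. Rather than checking each identity by hand, I would note that they hold automatically: since $\Delta[-]$ is functorial, so is $X\times\Delta[-]$, and so is $\Hom_{\SSet}(X\times\Delta[-],Y)$; the simplicial identities for $\Map(X,Y)$ are just the image under this last functor of the cosimplicial identities among the cofaces and codegeneracies in $\Delta$. This reduces the entire statement to the (routine) functoriality checks above.

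The main potential obstacle is a notational one rather than a mathematical one: the proposition as stated conflates $\Hom_{\SSet}(X,Y)$ with the mapping simplicial set $\Map(X,Y)$, so one must be careful to say that the $0$-simplices of this mapping object do recover the ordinary hom-set, namely
\[
\Map(X,Y)_0 = \Hom_{\SSet}(X\times\Delta[0],Y) \cong \Hom_{\SSet}(X,Y),
\]
using that $X\times\Delta[0]\cong X$. Once this identification is recorded, the construction above produces a canonical simplicial set whose set of vertices is $\Hom_{\SSet}(X,Y)$, which is exactly the enrichment claimed.
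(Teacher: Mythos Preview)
Your construction is correct and is precisely the standard one: defining $\Map(X,Y)_n \defeq \Hom_{\SSet}(X\times\Delta[n],Y)$ and obtaining the simplicial structure from functoriality of $[n]\mapsto X\times\Delta[n]$ is exactly how the enrichment is built in the cited reference. Note, however, that the paper does not supply its own proof of this proposition; it simply records the statement and defers to \cite[II.2.2]{GoerssJardine}, so there is no in-paper argument to compare against. Your write-up is essentially the argument found there, and the clarification that $\Map(X,Y)_0\cong\Hom_{\SSet}(X,Y)$ is a helpful addition.
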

	
	

	\subsection{The nerve of a category}
	
	In this section, we see that the nerve of a category gives a simplicial set.  Using the data that a natural transformation encodes, we show that nerves of categories are weakly equivalent if there exists functors between the categories with appropriate natural transformations with the identity functors.
	
	
	\begin{defn}
		The \emph{nerve} of a 
		category $\cC$ is the simplicial set defined levelwise by \[\nerve(\cC)_n\defeq \Hom_{Cat}([n],\cC). \]
	\end{defn}

	
	\begin{prop}\label{Nat_transf_homotopy}
		Given two functors $F,G: \cC \rightarrow \cD$ and a natural transformation $\eta: F\Rightarrow G$, there exists an induced homotopy $\vert\nerve (F)\vert \simeq \vert\nerve (G)\vert$.
	\end{prop}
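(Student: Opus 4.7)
The plan is to repackage the natural transformation as a functor and then use the functoriality of the nerve together with the fact that geometric realization commutes with products (in this case). This reduces the proposition to an essentially formal cylinder-object argument.

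First I would invoke the discussion immediately preceding the proposition: the data of $\eta\colon F\Rightarrow G$ is equivalent to a functor $\ev\colon [1]\times\cC\to\cD$ (writing $[1]=\{0\to 1\}=\{F\xrightarrow{\eta}G\}$) whose restrictions to $\{0\}\times\cC$ and $\{1\}\times\cC$ recover $F$ and $G$ respectively. Next I would apply the nerve to $\ev$; since the nerve is a right adjoint (to the fundamental category functor) it preserves products, so $\nerve([1]\times\cC)\cong \nerve([1])\times\nerve(\cC)=\Delta[1]\times\nerve(\cC)$, and we obtain a map of simplicial sets
\[
\nerve(\ev)\colon \Delta[1]\times\nerve(\cC)\longrightarrow \nerve(\cD)
\]
whose restriction to $\{i\}\times\nerve(\cC)$ is $\nerve(F)$ for $i=0$ and $\nerve(G)$ for $i=1$.

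Then I would take geometric realization. Since one of the factors, $\Delta[1]$, is a finite simplicial set, geometric realization carries this product to the honest product of topological spaces, giving a natural homeomorphism $\lvert\Delta[1]\times\nerve(\cC)\rvert\cong\lvert\Delta[1]\rvert\times\lvert\nerve(\cC)\rvert=[0,1]\times\lvert\nerve(\cC)\rvert$. Composing this homeomorphism with $\lvert\nerve(\ev)\rvert$ yields a continuous map
\[
H\colon [0,1]\times\lvert\nerve(\cC)\rvert\longrightarrow \lvert\nerve(\cD)\rvert
\]
which restricts to $\lvert\nerve(F)\rvert$ at $t=0$ and to $\lvert\nerve(G)\rvert$ at $t=1$, exhibiting the desired homotopy.

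The only step that is not purely formal is the commutation of geometric realization with the product $\Delta[1]\times\nerve(\cC)$; this is the standard fact (Milnor's theorem, or more elementarily the observation that $\Delta[1]$ is finite so one can work in the category of compactly generated spaces), and it is the main point one needs to address explicitly. Everything else is an immediate consequence of the reformulation of natural transformations from the preceding subsection together with functoriality of $\nerve$ and $\lvert-\rvert$.
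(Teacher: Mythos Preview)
Your proposal is correct and follows essentially the same strategy as the paper: repackage $\eta$ as a functor $\cC\times[1]\to\cD$, apply $\lvert\nerve(-)\rvert$, and use that this composite preserves the relevant product to obtain a cylinder-object homotopy. The only cosmetic difference is that you separate the product preservation into two steps (nerve as a right adjoint, then realization via the finiteness of $\Delta[1]$ or Milnor's theorem), whereas the paper handles $\lvert\nerve(\cC\times\{0\to1\})\rvert\simeq\lvert\nerve(\cC)\rvert\times I$ in one stroke by citing Hirschhorn.
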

	
	\begin{proof}
		As described in Section \ref{NatTrans_as_a_functor}, the notion of a  natural transformation $\eta : F\Rightarrow G$ equivalent to a functor $\eta: \cC \times  \{0\rightarrow 1\} \rightarrow \cD$. More specifically, we can think of $\eta : F\Rightarrow G$ as giving the commutative diagram
		\[
		\begin{tikzcd}
		\cC \times \{0\} \ar[d, hook] \ar[dr, bend left, "F"] &\\
		\cC \times  \{0\rightarrow 1\} \ar[r, "\eta"] & \cD. \\ 
		\cC \times \{1\} \ar[u, hook] \ar[ur, bend right, "G"'] & 
		\end{tikzcd}
		\]
		The functor $\vert\nerve (-)\vert$ can now be applied to this diagram.
		Note that
		$\vert\nerve (\{0\rightarrow 1\})\vert\simeq I$ where $I$ is the unit interval $[0,1]$. Also note that  $\vert\nerve (\cC \times \{ 0\to 1\}) \vert\simeq \vert \nerve(\cC)\vert \times I$ because $\vert \nerve(-)\vert$ preserves products \cite[14.1.5, 13.1.12]{Hirschhorn}. As a result, the diagram
		\[
		\begin{tikzcd}
		\vert\nerve (\cC )\vert \times 0 \ar[d] \ar[dr, bend left, "\vert \nerve(F)\vert"] &\\
		\vert\nerve (\cC) \vert\times I \ar[r, ""] & \vert\nerve (\cD)\vert \\ 
		\vert\nerve (\cC )\vert \times 1 \ar[u] \ar[ur, bend right, "\vert \nerve(G)\vert"'] & 
		\end{tikzcd}
		\]
		commutes and hence $\vert\nerve (\eta)\vert$  induces the desired 
		homotopy, $\vert\nerve (F)\vert \simeq \vert\nerve (G)\vert$.
	\end{proof}
	
	\begin{cor}\label{EquivCats_nerve}
		If the categories $\cC$ and $\cD$ are equivalent, then the simplicial sets $\nerve (\cC)$ and $\nerve (\cD)$ are weakly equivalent in the model structure for $\SSet$.
	\end{cor}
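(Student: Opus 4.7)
The plan is to reduce the claim to Proposition \ref{Nat_transf_homotopy} together with functoriality of $\nerve(-)$ and $|-|$. By hypothesis there exist functors $F\colon\cC\to\cD$ and $G\colon\cD\to\cC$ together with natural isomorphisms $\eta\colon G\circ F \Rightarrow \id_\cC$ and $\varepsilon\colon F\circ G\Rightarrow \id_\cD$. Applying the nerve functor and then geometric realization yields continuous maps $|\nerve(F)|\colon|\nerve(\cC)|\to|\nerve(\cD)|$ and $|\nerve(G)|\colon|\nerve(\cD)|\to|\nerve(\cC)|$, and since nerve and geometric realization are functors, $|\nerve(G)|\circ|\nerve(F)| = |\nerve(G\circ F)|$, and similarly for the other composite.

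Next I would feed the natural transformations $\eta$ and $\varepsilon$ into Proposition \ref{Nat_transf_homotopy} to obtain homotopies
\[ |\nerve(G\circ F)| \simeq |\nerve(\id_\cC)| = \id_{|\nerve(\cC)|}, \qquad |\nerve(F\circ G)| \simeq |\nerve(\id_\cD)| = \id_{|\nerve(\cD)|}. \]
Combining with the preceding observation, $|\nerve(F)|$ and $|\nerve(G)|$ are mutually inverse homotopy equivalences between the topological spaces $|\nerve(\cC)|$ and $|\nerve(\cD)|$. Any homotopy equivalence is in particular a weak homotopy equivalence, so $|\nerve(F)|\colon|\nerve(\cC)|\to|\nerve(\cD)|$ is a weak homotopy equivalence.

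Finally I invoke the definition of the weak equivalences in the model structure on $\SSet$ recalled just before Proposition \ref{Nat_transf_homotopy}: a map of simplicial sets is a weak equivalence exactly when its geometric realization is a weak homotopy equivalence. Thus $\nerve(F)\colon\nerve(\cC)\to\nerve(\cD)$ is a weak equivalence in $\SSet$, as required. There is no genuine obstacle here; the work was already done in establishing Proposition \ref{Nat_transf_homotopy}, and the only thing to verify is that the two homotopies it produces fit together to make $|\nerve(F)|$ a two-sided homotopy inverse of $|\nerve(G)|$, which is immediate from the functoriality of $\nerve$ and $|-|$.
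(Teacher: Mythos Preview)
Your proof is correct and follows essentially the same approach as the paper: both unpack the equivalence of categories into functors $F$, $G$ with natural isomorphisms, apply Proposition~\ref{Nat_transf_homotopy} to each natural isomorphism to obtain homotopies $|\nerve(G\circ F)|\simeq \id$ and $|\nerve(F\circ G)|\simeq \id$, use functoriality of $|\nerve(-)|$ to recognize these as the composites $|\nerve(G)|\circ|\nerve(F)|$ and $|\nerve(F)|\circ|\nerve(G)|$, and then pass from the resulting homotopy equivalence to a weak equivalence in $\SSet$. Your write-up is slightly more explicit about the final step (homotopy equivalence $\Rightarrow$ weak homotopy equivalence $\Rightarrow$ weak equivalence in $\SSet$), but the argument is the same.
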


	
	\begin{proof}
		Suppose $\cC$ and $\cD$ are equivalent categories.  
		Then there exist functors $F:\cC \rightarrow \cD$ and $G:\cD \rightarrow \cC$ such that $G\circ F \cong \id_\cC$ and $F\circ G\cong \id_\cD$. By Proposition \ref{Nat_transf_homotopy}, the diagrams
		\[
		\begin{tikzcd}
		\vert\nerve (\cC )\vert \times \{0\} \ar[d] \ar[dr, bend left, "\vert\nerve (G\circ F)\vert"] & &
		\vert\nerve (\cD )\vert \times \{0\} \ar[d] \ar[dr, bend left, "\vert\nerve (F \circ G)\vert"] &\\
		\vert\nerve (\cC)\vert \times  I \ar[r] & \vert\nerve (\cC)\vert &
		\vert\nerve (\cD)\vert \times  I \ar[r] & \vert\nerve (\cD)\vert \\ 
		\vert\nerve (\cC )\vert \times \{1\} \ar[u] \ar[ur, bend right, "\vert\nerve (\id_\cC)\vert"'] & &
		\vert\nerve (\cD )\vert \times \{1\} \ar[u] \ar[ur, bend right, "\vert\nerve (\id_\cD)\vert"'] & 
		\end{tikzcd}
		\]
		commute. It should be noted that, for example, $\vert \nerve (G\circ F)\vert=\vert \nerve (G)\vert \circ \vert \nerve(F)\vert$ and $\vert \nerve (id_\cC)\vert=id_{\vert \nerve (\cC)\vert}$. 
		So we have homotopies $\vert\nerve(G\circ F)\vert\simeq id_{\vert \nerve(\cC)\vert}$ and $\vert\nerve(F\circ G)\vert\simeq id_{\vert \nerve(\cD)\vert}$ and hence we have a homotopy equivalence between $\vert \nerve (\cC)\vert$ and $\vert\nerve (\cD)\vert$.  Thus $\nerve (\cC)$ is weakly equivalent to $\nerve (\cD)$ in the model structure for $\SSet$.
	\end{proof}
	
	There is a more general result.
	
	\begin{prop}\label{Nat_transf_we_nerves}
		Given functors $F:\cC\to\cD$ and $G:\cD\to\cC$ with natural transformations $\eta:G\circ F\Rightarrow \id_\cC$ and $\theta: F\circ G \Rightarrow \id_\cD$, $\nerve(\cC)$ is weakly equivalent to $\nerve(\cD)$ in the model structure for $\SSet$.
	\end{prop}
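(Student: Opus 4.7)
The plan is to essentially reuse the argument of Corollary \ref{EquivCats_nerve}, noting that Proposition \ref{Nat_transf_homotopy} only requires a natural transformation (not a natural isomorphism) to produce a homotopy between the geometric realizations. In other words, the hypothesis of the current proposition is strictly weaker than that of the corollary (we are given natural transformations rather than natural isomorphisms), but the conclusion of Proposition \ref{Nat_transf_homotopy} already goes through for arbitrary natural transformations, so the same strategy applies.

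Concretely, I would begin by applying Proposition \ref{Nat_transf_homotopy} to $\eta\colon G\circ F\Rightarrow \id_{\cC}$ and to $\theta\colon F\circ G\Rightarrow \id_{\cD}$ to obtain homotopies
\[
\vert\nerve(G\circ F)\vert \simeq \vert\nerve(\id_{\cC})\vert, \qquad \vert\nerve(F\circ G)\vert \simeq \vert\nerve(\id_{\cD})\vert.
\]
Next I would invoke the functoriality of $\nerve$ and of geometric realization, together with the observation that $\vert\nerve(\id_{\cC})\vert = \id_{\vert\nerve(\cC)\vert}$, to rewrite these as
\[
\vert\nerve(G)\vert\circ \vert\nerve(F)\vert \simeq \id_{\vert\nerve(\cC)\vert}, \qquad \vert\nerve(F)\vert\circ \vert\nerve(G)\vert \simeq \id_{\vert\nerve(\cD)\vert},
\]
which shows that $\vert\nerve(F)\vert$ and $\vert\nerve(G)\vert$ are mutually inverse homotopy equivalences between $\vert\nerve(\cC)\vert$ and $\vert\nerve(\cD)\vert$. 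Finally, I would conclude that $\nerve(F)\colon \nerve(\cC)\to\nerve(\cD)$ is a weak equivalence in the model structure on $\SSet$, since a map of simplicial sets is a weak equivalence exactly when its geometric realization is a weak homotopy equivalence (and in particular a homotopy equivalence of spaces is such).

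There is really no main obstacle here: the content is entirely contained in Proposition \ref{Nat_transf_homotopy}, whose proof never uses that $\eta$ is a natural isomorphism, only that it can be packaged as a functor out of $\cC\times\{0\to 1\}$. The only thing to be careful about is to note explicitly that the argument of Corollary \ref{EquivCats_nerve} nowhere uses the invertibility of the components $\eta_c$ and $\theta_d$, so the same diagram-chase produces the desired homotopy equivalence.
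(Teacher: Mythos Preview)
Your proposal is correct and follows essentially the same approach as the paper: apply Proposition~\ref{Nat_transf_homotopy} to $\eta$ and $\theta$ to obtain the two homotopies, deduce that $\vert\nerve(\cC)\vert$ and $\vert\nerve(\cD)\vert$ are homotopy equivalent, and conclude the weak equivalence in $\SSet$. The paper's proof is terser but identical in content; your additional remarks on functoriality and on why invertibility of $\eta$ and $\theta$ is not needed are accurate elaborations rather than a different argument.
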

	
	
	
	
	\begin{proof}
		By Proposition \ref{Nat_transf_homotopy}, we have homotopies $\vert\nerve (G\circ F)\vert\simeq  \id_{\vert \nerve(\cC)\vert}$ and  $\vert\nerve (F\circ G)\vert\simeq \id_{\vert \nerve(\cD)\vert}$. Thus $\vert \nerve (\cC)\vert$ is homotopy equivalent to $\vert \nerve (\cD)\vert$ and hence $\nerve(\cC)$ is weakly equivalent to $\nerve(\cD)$ in $\SSet$.
	\end{proof}
	
	Note that the direction of the natural transformations $\eta$ and $\theta$ had no effects on the proof for Proposition \ref{Nat_transf_we_nerves}.
	
	\subsection{A motivating example}\label{motivating_example}
	
	In Proposition \ref{Nat_transf_we_nerves} the categories $\cC$ and $\cD$ are not required to be equivalent in order for $\nerve(\cC)$ and $\nerve(\cD)$ to be weakly equivalent in $\SSet$. In fact, as we see in the following example, two categories can have weakly equivalent nerves even if the categories are not equivalent.
	
	\begin{ex}\label{example_nerve_trivandnontrivcats}
		Let $\cC$ be the category with one nontrivial morphism between two objects, $f:x\to y$, and let $\cD$ be the subcategory containing just the object $x$ and its identity morphism.  Let $F:\cC\to \cD$ be the functor sending every object of $\cC$ to $x$ and every morphism to the identity morphism on $x$. Let $G:\cD\to \cC$ be the natural inclusion. By construction, $F\circ G$ is the identity functor $\id_{\cD}$. Now we want to construct a natural transformation between $G\circ F$ and the identity functor $\id_\cC$. Note that $G\circ F(f)=\id_x$. We can construct a natural transformation $\eta:G\circ F \Rightarrow \id_\cC$ by letting $\eta_x\defeq \id_x$ and $\eta_y\defeq f$. Thus, by Proposition \ref*{Nat_transf_we_nerves}, the nerves of $\cC$ and $\cD$ are weakly equivalent. But we claim that the categories $\cC$ and $\cD$ are not equivalent.
		
		To see that $\cC$ and $\cD$ are not equivalent, note that the functor $F:\cC \to \cD$ is the only possible functor we can construct going from $\cC$ to $\cD$. Note that $\Hom_{\cC}(y,x)$ is the empty set, but $\Hom_{\cD}(F(y),F(x))=\Hom_{\cD}(x,x)=\{\id_x\}$. Thus $F$ is not faithful and hence the categories $\cC$ and $\cD$ are not equivalent.
	\end{ex}
	
	
	This example highlights the necessity for a finer tool than the nerve in order to distinguish the difference between categories that may not have been equivalent. The classifying diagram, which is defined in Section \ref{Classifying_Diagram_section},  is a generalization of the nerve and it can distinguish the difference between the categories described in the above example.  The purpose of this thesis is to provide a deeper investigation into the classifying diagram.
	
	\subsection{Simplicial spaces and the Reedy model structure}
	
	We provide the definition of a simplicial space, explain two different ways to build a simplicial space from a simplicial set, and describe what it means for simplicial spaces to be weakly equivalent.
	
	\begin{defn}
		A \emph{simplicial space} is a functor $\Delta^{\op}\to \SSet$.
	\end{defn}

	We denote the category of simplicial spaces by $\SSpace$. That is, $\SSpace$ has simplicial spaces as objects and natural transformations as morphisms. Because $\SSpace$ is enriched in $\SSet$, we let $\Map(X,Y)$ denote the the mapping space between $X$ and $Y$.
	
	The data for a simplicial space $X$ can be rewritten in terms of simplicial sets $X([n])\eqdef X_n$ along with face and degeneracy maps.
	
	Given a set $X$, we define the constant simplicial set by applying the functor $\const: \Set\to \SSet$ which maps the set $X$ to the simplicial set defined levelwise $\const(X)_n\defeq X$ where the face and degeneracy maps are identity maps.
	
	If instead we have a simplicial set $X$, we define two different simplicial spaces by applying two different functors $\SSet\to \SSpace$. 	
	\begin{enumerate}
		\item We define a constant simplicial space by applying the functor $\SSet\to \SSpace$ which maps the simplicial set $X$ to the simplicial space where each level is the simplicial set $X$  and the face and degeneracy maps are identity maps. We also denote the resulting constant simplicial space by $X$.
		\item Let $X_n$ be the $n$th level of the simplicial set $X$. We define a discrete simplicial space, denoted by $X^t$, by applying the functor $\SSet \to \SSpace$ levelwise; it maps the set $X_n$ to the simplicial set $X_n^t\defeq \const(X_n)$. 
	\end{enumerate}
	The ``$t$" in the notation of the simplicial space $X^t$  is used because  $X^t$ is an analog for the ``transpose" of the constant simplicial space $X$.

	
	
	The category $\SSpace$ is enriched in $\SSet$. The simplicial space $\Delta[n]^t$ is representable and hence we have \[X_n \cong \Map(\Delta[n]^t,X)\] using the enriched version of the Yoneda Lemma \cite[\S 2.3]{Rezk}. 
	
	
	In this paper we use the Reedy model category structure on $\SSpace$.  The weak equivalences in the Reedy model structure are the levelwise weak equivalences of simplicial sets \cite[A]{Reedy}. 
	


	\subsection{Segal spaces}

	In this section we see that a Segal space is a simplicial space with additional structure. In particular, if $X$ is a Segal space, then $X_n$ can be written in terms of $X_0$ and $X_1$.  
	
	In the category $\Delta$, define maps $\alpha^i:[1]\to[n]$ for $0\leq i <n$ where $\alpha^i(0)=i$ and $\alpha^i(1)=i+1$. Using the construction of the $\alpha^i$'s, we construct the simplicial set
	\[\displaystyle{G(n)\defeq \bigcup_{i=0}^{n-1} \alpha^i\Delta[1]\subset \Delta[n]}.\]
	
	For a simplicial set $X$  and $n\geq 2$,
	\begin{align*}
	\Hom_{\SSet} (G(n),X) & \cong \underbrace{ X_1\times_{X_0}\cdots \times_{X_0} X_1}_n\\
	& = \lim \left( X_1 \xrightarrow{d_0} X_0\xleftarrow{d_1} X_1  \xrightarrow{d_0} \cdots \xleftarrow{d_1} X_1\right).
	\end{align*}

	
	The inclusion $G(n)\subset \Delta[n]$ of simplicial sets induces an inclusion $G(n)^t \hookrightarrow \Delta[n]^t$ of simplicial spaces. For a fixed simplicial space $X$, this inclusion of simplicial spaces induces a map
	\[ 
	\begin{tikzcd}
	\Map(\Delta[n]^t, X) \ar[r] \ar[d, equals] & \Map(G(n)^t, X) \ar[d, equals] \\ X_n \ar[r, "\varphi_n"] & \underbrace{X_1 \times_{X_0}\cdots \times_{X_0} X_1}_n
	\end{tikzcd}
	\]
	between simplicial sets for $n\geq 2$. We call  $\varphi_n$ 
	a \emph{Segal map}.
	
	
	\begin{defn}\cite[\S 4.1]{Rezk}
		A simplicial space $W$ is a \emph{Segal space} if $W$ is Reedy fibrant and the Segal maps $\varphi_n$ are weak equivalences for $n\geq 2$.
	\end{defn}
	
	Generally speaking, in a model structure, every object is weakly equivalent to a fibrant object. In the definition of a Segal space, we require $W$ to be Reedy fibrant in order to guarantee pullbacks and homotopy pullbacks coincide. We also note that the Segal maps $\varphi_n$ in a Segal space are acyclic fibrations \cite[\S 4.1]{Rezk}.
	
	
	\subsection{Complete Segal spaces}
	
	In order to arrive at the definition of a complete Segal space, we observe that a Segal space mimics the structure of a category.  
	
	\begin{defn} \cite[\S 5.1]{Rezk}
		The \emph{set of objects} of a Segal space $W$ is $\ob(W)\defeq W_{0,0}$, which is the zeroth level of the simplicial set $W_0$.
	\end{defn}
	
	Now that a Segal space has objects like a category, we need to define the analog of the hom-space between two objects.
	
	\begin{defn} \cite[\S 5.1]{Rezk}
		Let $W$ be a Segal space and $x,y\in\ob (W)$. The \emph{mapping space} $\map_W(x,y)$ is defined by the pullback square
		\[ 
		\begin{tikzcd}
		\map_W(x,y) \ar[r] \ar[d] & W_1 \ar[d, "(d_1\comma d_0)"] \\ 
		\{(x,y)\} \ar[r] & W_0\times W_0
		\end{tikzcd}
		\]
		of simplicial sets. 	
	\end{defn}
	
	The requirement that $W$ is Reedy fibrant means that the the map $(d_1,d_0):W_1\to W_0\times W_0$ is a fibration \cite[\S 5.1]{Rezk}. Hence $\map_W(x,y)$ is also a homotopy pullback in the above diagram.
	
	Given a Segal space $W$, note that if $x\in W_0$, then $s_0x\in W_1$ and $d_1s_0x=x=d_0s_0x$. So if $x$ is in the set of objects of $W$, then $s_0x\in \map(x,x)_0$, which leads to the following definition.
	
	\begin{defn}\cite[\S 5.1]{Rezk}
		Given a Segal space $W$ and $x\in\ob(W)$, the \emph{identity map} of $x$ is defined to be $id_x\defeq s_0x\in \map_W(x,x)_0$.
	\end{defn}
	
	In a Segal space, so far we have objects, mapping spaces between objects, and the identity map.  
	In a category, composition is unique. 
	However, if $f\in \map_W(x,y)_0$ and $g\in \map_W(y,z)_0$, what does ``$g\circ f$" mean, and is it in $\map_W(x,z)_0$? 
	To see how Rezk answered this question, we first define what it means to be homotopic in the mapping space, and then generalize the definition of the mapping space.
	
	\begin{defn}\cite[\S 5.3]{Rezk}
		Let $f,g\in\map_W(x,y)_0$ where $x$ and $y$ are objects in a Segal space $W$. We say $f$ and $g$ are \emph{homotopic}, denoted $f\simeq g$, if they lie in the same component of $\map_W(x,y)$.
	\end{defn}
	
	
	In order to get to the point where we can talk about composition, we need to be able to relate, for example, $\map_W(x,y)\times \map_W(y,z)$ with $\map_W(x,z)$. To accomplish this relationship, we generalize the definition of the mapping space between two objects and define a mapping space between a finite collection of objects.
	In particular, given $x_0,\ldots,x_n\in\ob(W)$, $\map_W(x_0,\ldots,x_n)$ is defined by the pullback square
	\[ 
	\begin{tikzcd}
	\map_W(x_0,\ldots,x_n) \ar[r] \ar[d] & W_n \ar[d, ""] \\ 
	\{(x_0,\ldots,x_n)\} \ar[r] & \underbrace{W_0\times\cdots \times W_0}_{n+1}.
	\end{tikzcd}
	\]
	
	Let $V\defeq \map_W(x_0,\ldots,x_n)$ where $W$ is a Segal space. Then the Segal map \[\varphi_n:V_n \xrightarrow{\simeq} \underbrace{V_1\times_{V_0}\cdots \times_{V_0}V_1}_n\]
	is actually the map \[\varphi_n: \map_W(x_0,\ldots, x_n) \xrightarrow{\simeq} \map_W(x_0,x_1)\times \map_W(x_1,x_2)\times \cdots \times \map_W(x_{n-1},x_n).\]
	
	Let $(f,g)\in \map_W(x,y)_0\times\map_W(y,z)_0$ where $W$ is a Segal space. We want to define what it means to ``compose" $g$ and $f$. Recall that the Segal map $\map_W(x,y,z)\to \map_W(x,y)\times \map_W(y,z)$ is an acyclic fibration and the map $\emptyset\to \{(f,g)\}$ is a cofibration in the model structure for $\SSet$. Thus, by the fourth model category axiom \cite[3.3]{DwyerSpalinski}, there exists a lift $\{(f,g)\}\to \map_W(x,y,z)$ making the diagram
	\[ 
	\begin{tikzcd}
	\emptyset \ar[r] \ar[d, hook] & \map_W(x,y,z) \ar[d, two heads, "\simeq"', "\varphi_2"] \\ 
	\{(f,g)\} \ar[r] \ar[ru, dotted] & \map_W(x,y)\times \map_W(y,z)
	\end{tikzcd}
	\]
	commute.	Thus we can define a \emph{composition} of $g$ and $f$ as a lift $k\in\map_W(x,y,z)$ of $(f,g)$ along the Segal map $\varphi_2$. The \emph{result} of a composition $k$ is $d_1k\in\map_W(x,z)_0$. A result is not unique, but a result is unique up to homotopy. We let $g\circ f$ denote a result of a composition.  In particular, as seen in the following proposition, a Segal space has a category theory structure up to homotopy.
	
	\begin{prop}\cite[5.4]{Rezk}\label{SegalSpace_assoc_identity}
		In a Segal space $W$, let $w,x,y,z\in\ob(W)$ and $(f,g,h)\in\map_W(w,x)_0\times \map_W(x,y)_0\times \map_W(y,z)_0$. Then
		\begin{enumerate}
			\item $(h\circ g)\circ f \simeq h \circ (g\circ f)$ and
			\item $f\circ \id_w\simeq f \simeq \id_x\circ f$.
		\end{enumerate}
	\end{prop}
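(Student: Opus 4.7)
The plan is to leverage the Segal condition at levels two and three together with the simplicial identities to produce explicit composition witnesses from which both claims can be read off directly.

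For the identity axiom (2), I would use degenerate $2$-simplices as the witnesses. Given $f \in \map_W(w,x)_0$, regarded as a vertex of the simplicial set $W_1$, apply $s_0 \colon W_1 \to W_2$ to obtain $s_0 f \in W_{2,0}$. Using the simplicial identities, one computes $d_2 s_0 f = s_0 d_1 f = s_0 w = \id_w$, while $d_0 s_0 f = f = d_1 s_0 f$. Consequently $\varphi_2(s_0 f) = (\id_w, f)$, so $s_0 f$ is a valid lift along $\varphi_2$ exhibiting a composition of $\id_w$ followed by $f$ whose result is $d_1 s_0 f = f$ itself. Since any two choices of composition lie in the same component of $\map_W(w,x)$, this gives $f \circ \id_w \simeq f$. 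The symmetric computation with $s_1 f$, whose three relevant faces are $f$, $\id_x$, and $f$, yields $\id_x \circ f \simeq f$.

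For associativity (1), I would work in the generalized mapping space $\map_W(w,x,y,z)$. The Segal map $\varphi_3 \colon \map_W(w,x,y,z) \to \map_W(w,x) \times \map_W(x,y) \times \map_W(y,z)$ is an acyclic fibration, so lifting the cofibration $\emptyset \hookrightarrow \{(f,g,h)\}$ produces a $0$-simplex $\sigma$ with $\varphi_3(\sigma) = (f,g,h)$. Applying the Segal condition at level two to each of the four two-dimensional faces of $\sigma$, I would identify $d_3 \sigma$ as a composition witness for $(f,g)$ with result $\alpha \defeq d_1 d_3 \sigma$, identify $d_0 \sigma$ as a composition witness for $(g,h)$ with result $\beta \defeq d_1 d_0 \sigma$, identify $d_1 \sigma$ as a composition witness for $(\alpha, h)$ with result $d_1 d_1 \sigma = h \circ (g \circ f)$, and identify $d_2 \sigma$ as a composition witness for $(f, \beta)$ with result $d_1 d_2 \sigma = (h \circ g) \circ f$. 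The simplicial identity $d_1 d_1 = d_1 d_2$ then forces these last two results to be literally equal as elements of $\map_W(w,z)_0$, and since any two compositions are homotopic, the desired homotopy $(h\circ g)\circ f \simeq h \circ (g\circ f)$ follows.

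The main obstacle will be the bookkeeping needed to verify that each face $d_i \sigma$ really is a witness for the claimed composition. This amounts to unwinding $\varphi_2$ applied to each face in terms of the iterated face maps and confirming via $d_i d_j = d_{j-1} d_i$ (for $i<j$) that the two spine edges of each two-dimensional face match up with the prescribed source, intermediate vertex, and target. None of this is deep, but the indexing conventions have to be tracked with care.
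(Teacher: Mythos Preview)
The paper does not supply its own proof of this proposition; it is simply quoted from Rezk with the citation \cite[5.4]{Rezk} and then used. So there is no in-paper argument to compare against.

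Your proposal is correct and is essentially the standard argument one finds in Rezk's paper: degenerate $2$-simplices witness the unit laws, and a single lift $\sigma$ along the acyclic fibration $\varphi_3$ provides, via its four $2$-dimensional faces and the identity $d_1d_1=d_1d_2$, coincident representatives for the two associativity parenthesizations. Your face computations are accurate (e.g.\ $\varphi_2(d_3\sigma)=(d_2d_3\sigma,d_0d_3\sigma)=(f,g)$, and $d_2d_1\sigma=d_1d_3\sigma=\alpha$ by $d_id_j=d_{j-1}d_i$), and the closing appeal to ``any two results of a composition are homotopic'' is exactly the fact the paper records just before stating the proposition. One small point worth making explicit when you write it up: the equality $d_1d_1\sigma=d_1d_2\sigma$ gives coincidence for the \emph{particular} intermediate composites $\alpha$ and $\beta$ arising from $\sigma$; passing to arbitrary choices of $g\circ f$ and $h\circ g$ uses that composition is well-defined on homotopy classes, which is part of the same package in \cite[5.3--5.4]{Rezk}.
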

	
	We can use the up-to-homotopy category structure in a Segal space to define a category.
	
	\begin{defn}\cite[\S 5.5]{Rezk}
		The \emph{homotopy category} of a Segal space $W$, denoted as $\Ho W$, has $\ob(W)$ as objects and $\Hom_{\Ho(W)}(x,y)=\pi_0\map_W(x,y)$.
	\end{defn}

	If $f\in\map_W(x,y)_0$, let $[f]\in \Hom_{\Ho(W)}(x,y)$ denote its associated equivalence class.
	
	\begin{defn}\cite[\S 5.5]{Rezk}
		Let $W$ be a Segal space with objects $x$ and $y$. We say $f\in\map_W(x,y)_0$ is a \emph{homotopy equivalence} if $[f]\in\Hom_{\Ho(W)}(x,y)$ is an isomorphism.
	\end{defn}
	
	In other words, $f\in\map_W(x,y)_0$ is a homotopy equivalence if there exists $g,h\in\map_W(y,x)_0$ such that $f\circ g\simeq id_y$ and $h\circ f\simeq \id_x$. Observe that Proposition \ref{SegalSpace_assoc_identity} implies $g\simeq h$. Also note that $\id_x\in \map_W(x,x)_0$ is a homotopy equivalence.
	
	The following result shows us that we can define a subspace using the homotopy equivalences in a Segal space.
	
	\begin{prop}\cite[5.8]{Rezk}
		If $[f]=[g]\in\Hom_{\Ho(W)}(x,y)$, then $f$ is a homotopy equivalence  if and only if $g$ is a homotopy equivalence in the Segal space $W$.
	\end{prop}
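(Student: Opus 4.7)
The plan is essentially to unpack the definition of homotopy equivalence, since once that is done the statement becomes a tautology. According to the definition given immediately above, $f\in\map_W(x,y)_0$ is a homotopy equivalence precisely when its class $[f]\in\Hom_{\Ho(W)}(x,y)$ is an isomorphism. The hypothesis $[f]=[g]$ is an equality of elements of $\Hom_{\Ho(W)}(x,y)$, so these two classes are literally the same element of the homotopy category, and thus one is an isomorphism if and only if the other is.

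If one prefers a slightly more explicit argument that does not appeal to the very definition of $[f]$, I would proceed as follows. Suppose $f$ is a homotopy equivalence, so there exist $g',h'\in\map_W(y,x)_0$ with $f\circ g'\simeq\id_y$ and $h'\circ f\simeq\id_x$. Since composition in $\Ho W$ is well-defined on homotopy classes by Proposition \ref{SegalSpace_assoc_identity} (associativity up to homotopy together with the fact that $\pi_0$ of the mapping space is exactly the set of homotopy classes), and since $[f]=[g]$, I get $[g]\circ[g']=[f]\circ[g']=[f\circ g']=[\id_y]$ and $[h']\circ[g]=[h']\circ[f]=[h'\circ f]=[\id_x]$. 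Thus $[g]$ admits both a left and a right inverse in $\Ho W$, making it an isomorphism, so $g$ is a homotopy equivalence. The converse direction is symmetric.

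There is no real obstacle here; the only subtlety worth noting is that one must invoke Proposition \ref{SegalSpace_assoc_identity} to know that composition descends to $\Ho W$, which is why ``$g\circ g'$" being homotopic to $\id_y$ follows from ``$f\circ g'$" being homotopic to $\id_y$ when $f\simeq g$. Given the brevity of the argument, I would present only the one-line version that reads the claim off the definition of homotopy equivalence.
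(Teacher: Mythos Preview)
Your argument is correct. In fact, with the definition of homotopy equivalence adopted in this paper (namely, $f$ is a homotopy equivalence iff $[f]$ is an isomorphism in $\Ho W$), the statement is indeed a tautology, exactly as you observe in your one-line version.

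Note, however, that the paper does not supply its own proof of this proposition: it is simply quoted from Rezk with the citation \cite[5.8]{Rezk} and left unproved. In Rezk's original treatment the definition of homotopy equivalence is the more concrete one (existence of one-sided homotopy inverses), and there the proposition requires the short argument you give in your second paragraph---checking that composition is well-defined on homotopy classes so that the inverses for $f$ also serve for $g$. Since this paper has chosen the ``$[f]$ is an isomorphism'' formulation as the \emph{definition}, the content of Rezk's 5.8 has effectively been absorbed into that choice, which is why your first, tautological reading suffices here.
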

	
	So in a Segal space $W$, the \emph{space of homotopy equivalences} is defined as the subspace $W_\heq\subseteq W_1$ which consists of the components of $W_1$ whose $0$-simplices are homotopy equivalences. 
	
	Note that for any object $x$ in $W$, $s_0x\defeq\id_x$ is a homotopy equivalence and hence the degeneracy map $s_0: W_0\to W_1$ factors through $W_\heq$.
	
	\begin{defn} \cite[\S 6]{Rezk}
		A \emph{complete Segal space} $W$ is a Segal space such that the map $s_0:W_0\to W_\heq$ is a weak equivalence.
	\end{defn}
	
	To see the importance of requiring $s_0$ to be a weak equivalence, consider the category $I[1]$, which is given by 
	\[ 
	\begin{tikzcd}
	0 \ar[r, bend left, "\cong"] & 1. \ar[l, bend left, "\cong"]
	\end{tikzcd}
	\]
	We let $ij$ denote the isomorphism $i\to j$ in $I[1]$ where $i,j\in\{0,1\}$. Let $E[1]\defeq\nerve(I[1])$. Then $E[1]_0=\{0,1\}$ and $E[1]_1=\{00,11,01,10\}$. Note that the categories $[0]$ and $I[1]$ are equivalent. Now compare the Segal space $\Delta[0]^t$ and $E[1]^t$. Levelwise $\Delta[0]^t$ is contractible, but $E[1]^t$ is not levelwise contractible.  For example, $E[1]^t_0$ is the constant simplicial set given by the set $\{0,1\}$. By definition, $\ob(E[1]^t)\defeq E[1]^t_{0,0}=\{0,1\}$. Note that $01\in \map_{E[1]^t}(0,1)$ and $10\in \map_{E[1]^t}(1,0)$ are homotopy equivalences.  So the objects $0$ and $1$ have homotopy equivalences going between them. However the simplicial set $E[1]^t_0$ is discrete; there is no path in $E[1]^t_0$ between $0$ and $1$. So in the definition of complete Segal spaces, the whole point of requiring $s_0:W_0\to W_\heq$ to be a weak equivalence is to guarantee that if there is a homotopy between two objects, then there is also a path between the objects.
	
	
	In the next section we show that the classifying diagram is a complete Segal space.

	\section{The classifying diagram}\label{Classifying_Diagram_section}

	
	
	In this section we see that Rezk's classifying diagram, which is a generalization of the nerve, is a simplicial space that naturally has the structure of a complete Segal space.  
	In Example \ref{example_nerve_trivandnontrivcats} we saw that the nerve fails at distinguishing between categories that are not equivalent; we revisit the categories from this example and show that the classifying diagrams are in fact not equivalent. We compute the classifying diagram of two preliminary examples: a finite ordered set $[n]$, and the category of finite ordered sets $\Delta$. 
	Additionally we show that the classifying diagram of a category $\cC$ is equivalent to the discrete simplicial space $\nerve(\cC)^t$ if and only if the identities are the only isomorphisms in $\cC$. We close the section by proving for a general category that the levels of the classifying diagram can be written in terms of classifying spaces of stabilizers. Every classifying diagram description we provide in this paper is decomposed into classifying spaces of groups.

	\subsection{The definition of the classifying diagram}
	
	Before we can define the classifying diagram, we need to specify the notation used in the definition. If $\cD$ is a category, let $\iso(\cD)$ denote the subcategory where the objects are the same as in $\cD$, but the morphisms are only the isomorphisms of $\cD$. In the literature, $\iso(\cD)$ is sometimes called the maximal subgroupoid of $\cD$.
	
	\begin{defn} \cite[\S 3.5]{Rezk}
		The \emph{classifying diagram} of a category $\cC$ is denoted by $NC$ and is the simplicial space defined levelwise by \[(N\cC)_n\defeq\nerve (\iso(\cC^{[n]})).\]
	\end{defn}
	
	\begin{prop}\cite[6.1]{Rezk}\label{Prop_Classfying_Diagram_compl_Segal_space}
		The classifying diagram of a small category $\cC$ is a complete Segal space.
	\end{prop}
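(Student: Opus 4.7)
The plan is to verify the three requirements of a complete Segal space in order: Reedy fibrancy, the Segal condition, and completeness. All three hinge on the same structural observation, namely that $\iso(\cC^{[n]})$ is a groupoid, so each level $(N\cC)_n$ is the nerve of a groupoid and hence automatically a Kan complex.

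First I would attack Reedy fibrancy. I would unpack the matching object $M_n(N\cC)$ as the nerve of a groupoid of ``boundary chains,'' since nerve preserves the limits that define matching objects. The matching map $(N\cC)_n \to M_n(N\cC)$ is then the nerve of a functor of groupoids that extends a boundary chain to a full chain; this functor is an isofibration because one can lift isomorphisms of boundaries to isomorphisms of chains componentwise. The nerve of an isofibration of groupoids is a Kan fibration, so Reedy fibrancy follows.

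Next I would verify the Segal condition by noting that, as a category, $\cC^{[n]}$ is naturally isomorphic to the iterated fiber product $\cC^{[1]}\times_{\cC}\cdots\times_{\cC}\cC^{[1]}$ via the composition/restriction functors, and this decomposition is preserved by $\iso(-)$ because an $(n+1)$-tuple of isomorphisms between chains is determined by its components. Since the nerve functor is a right adjoint it preserves this limit, giving an isomorphism
\[
(N\cC)_n \xrightarrow{\cong} (N\cC)_1 \times_{(N\cC)_0} \cdots \times_{(N\cC)_0} (N\cC)_1
\]
onto the strict pullback. Reedy fibrancy from the previous step guarantees this strict pullback agrees with the homotopy pullback, so the Segal map is a weak equivalence.

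The main obstacle, and the most delicate step, is completeness: I must show $s_0\colon (N\cC)_0 \to (N\cC)_\heq$ is a weak equivalence. The plan here is first to identify the $0$-simplices of $(N\cC)_\heq$ with the isomorphisms of $\cC$, using the description of composition in a Segal space together with Proposition \ref{SegalSpace_assoc_identity} to argue that a morphism $f\in (N\cC)_{1,0}$ admits a homotopy inverse in the Segal space sense if and only if $f$ is an honest isomorphism in $\cC$. Next I would identify $(N\cC)_\heq$ with the nerve of the full subgroupoid $\cG$ of $\iso(\cC^{[1]})$ spanned by isomorphisms of $\cC$, and construct a functor $\cG \to \iso(\cC)$ sending $f\colon x\to y$ to $y$ (say), together with a natural isomorphism exhibiting this as an equivalence inverse to $s_0$ (the naturality square uses that both $f$ and the chosen commuting square of an isomorphism of chains consist of isomorphisms). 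By Corollary \ref{EquivCats_nerve}, passing to nerves produces the required weak equivalence, completing the verification.
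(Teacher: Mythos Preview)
Your proposal is correct and follows essentially the same strategy as the paper: cite or sketch Reedy fibrancy, observe the Segal maps are isomorphisms, then for completeness identify $(N\cC)_\heq$ with the nerve of the groupoid of isomorphisms in $\cC$ and show that groupoid is equivalent to $\iso(\cC)$.

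The only noteworthy difference is in how the completeness step is packaged. The paper identifies your groupoid $\cG$ as $\iso(\cC^{I[1]})$, where $I[1]$ is the walking isomorphism, and then invokes the abstract equivalence $I[1]\simeq[0]$ together with Proposition~\ref{Equiv_of_functor_cats} to conclude $\iso(\cC^{I[1]})\simeq\iso(\cC^{[0]})$ in one stroke. You instead construct the equivalence $\cG\to\iso(\cC)$ by hand, sending $f\colon x\to y$ to its target and exhibiting the natural isomorphism. Both arguments are short; the paper's is slightly more conceptual (it isolates exactly which categorical fact drives completeness), while yours is more self-contained and avoids the extra layer of notation. Your treatment of Reedy fibrancy via matching objects and isofibrations is also more detailed than the paper, which simply cites \cite[9.1.1]{Bergner_book}.
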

	
	\begin{proof}
		We refer the reader to \cite[9.1.1]{Bergner_book} for the proof that $N\cC$ is Reedy fibrant. By construction, the Segal map
		\[\varphi_n:(N\cC)_n\to \underbrace{(N\cC)_1\times_{(N\cC)_0}\cdots \times_{(N\cC)_0}(N\cC)_1}_n\]
		is an isomorphism for all $n\geq 2$, and hence $N\cC$ is a Segal space. Now we need to show that $N\cC$ is complete. 
		Note that for any $x,y\in\ob(\cC)\cong \ob(N\cC)$, there is a natural bijection between the sets $\map_{N\cC}(x,y)_0$ and $\Hom_\cC (x,y)$. So if $f\in \left( (N\cC)_\heq\right)_0$ where $f\in\map_{N\cC}(x,y)_0$, then there exists $g\in \left((N\cC)_\heq\right)_0$ where $f\circ g=\id_y$, $g\circ f=\id_x$, and $g\in\map_{N\cC}(y,x)_0$. To see that $(N\cC)_{\heq}\simeq\nerve(\iso(\cC^{I[1]}))$, note that $f\in\left( (N\cC)_\heq\right)_0$ with inverse $g\in \map_{N\cC}(y,x)_0$ if and only if $(f,g)\in\ob\left(\iso(\cC^{I[1]})\right)\cong \nerve\left(\iso(\cC^{I[1]})\right)_0$. Since the categories $I[1]$ and $[0]$ are equivalent, we have that $\cC^{I[1]}\simeq \cC^{[0]}$ by Proposition \ref{Equiv_of_functor_cats}. Thus, using Corollary \ref{EquivCats_nerve}, we have
		\[(N\cC)_0\defeq\nerve \left(\iso(\cC^{[0]})\right) \simeq \nerve \left( \iso ( \cC^{I[1]})\right)\simeq(N\cC)_\heq \]
		and hence $N\cC$ is complete.
	\end{proof}
	
	The fact that the Segal maps
	\[\varphi_n:(N\cC)_n\to \underbrace{(N\cC)_1\times_{(N\cC)_0}\cdots \times_{(N\cC)_0}(N\cC)_1}_n\]
	are isomorphisms means that if we want to describe the $n$th level of the classifying diagram of a category $\cC$, it suffices to describe 
	$(N\cC)_0$ and $(N\cC)_1$ because $(N\cC)_n$ is $n$ copies of the $(N\cC)_1$ glued together along $(N\cC)_0$.
	 Thus in many examples presented in this paper, we only provide a description for the $0$th and $1$st levels.
	
	Let us revisit the motivating example from Section \ref{motivating_example}.
	
	\begin{ex}
		Let us consider the categories that were discussed in Example \ref{example_nerve_trivandnontrivcats} and justify the claim that the classifying diagram of these two categories are different. In order to show that the classifying diagrams of these two categories are not weakly equivalent in the Reedy model structure, it suffices to find one level in which the respective simplicial sets are not weakly equivalent in the model structure on simplicial sets. Let us begin with the subcategory $\cD$, which just has one object, $x$, and its identity morphism.  Note that $\iso(\cD^{[0]})\simeq\iso(\cD)=\cD$,  and hence \[(N\cD)_0\simeq \nerve(\cD).\] Now, for the category $\cC$, note that the morphism $f:x\to y$ is not an isomorphism and hence $\iso(\cC)$ only has the two objects, $x$ and $y$, along with their identity morphisms. So we have $\iso(\cC^{[0]})\simeq \iso(\cC)\simeq \cD \coprod \cD$, 
		and hence \[(N\cC)_0\simeq \nerve(\cD) \amalg \nerve(\cD).\] Therefore the classifying diagrams $N\cC$ and $N\cD$ are not weakly equivalent.
	\end{ex}

	\subsection{Categories with only isomorphisms}
	
	In contrast to the category $\cC$ in the previous example, we only consider categories whose morphisms are only isomorphisms in this section.
	We show that the classifying diagram of a category with only isomorphisms is levelwise equivalent to its classifying space.
	
	Any group $G$ can be thought of as a category.  Namely, $G$ is a category with one object whose morphisms are given by each element in the group.  Since each group element has an inverse, all of the morphisms in the category $G$ are isomorphisms.
	
	\begin{prop} \cite[\S 3.5]{Rezk} \label{Prop_Classifying_Diagram_Group}
		Let $G$ be a group thought of as a category with one object. Then 
		\[(NG)_n\simeq \nerve(G)\] for any $n\geq 0$.
	\end{prop}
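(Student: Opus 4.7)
The plan is to reduce to an equivalence of categories and then invoke Corollary \ref{EquivCats_nerve}. First I would observe that because $G$ is a group, every morphism in $G$ (viewed as a one-object category) is invertible, so $G$ is a groupoid. Consequently the functor category $G^{[n]}$ is itself a groupoid: any natural transformation $(\alpha_0, \ldots, \alpha_n)$ between functors $[n] \to G$ has all components in $G$, hence invertible, so is automatically a natural isomorphism. This gives $\iso(G^{[n]}) = G^{[n]}$, reducing the statement to proving $\nerve(G^{[n]}) \simeq \nerve(G)$.

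Next I would exhibit an equivalence of categories $G \simeq G^{[n]}$. Note that Proposition \ref{Equiv_of_functor_cats} does not apply directly, since $[0]$ and $[n]$ are not equivalent as categories. Define $H \colon G \to G^{[n]}$ to send the unique object of $G$ to the constant chain $(e, \ldots, e)$ of identities and an element $g \in G$ to the constant natural transformation $(g, g, \ldots, g)$. Define $F \colon G^{[n]} \to G$ by evaluation at $0$, so $F$ sends $(g_1, \ldots, g_n)$ to the unique object of $G$ and a morphism $(\alpha_0, \ldots, \alpha_n)$ to $\alpha_0$. Then $F \circ H = \id_G$ strictly, and I would produce a natural isomorphism $\eta \colon H \circ F \Rightarrow \id_{G^{[n]}}$ whose component at $(g_1, \ldots, g_n)$ is the morphism $(e,\, g_1,\, g_2 g_1,\, \ldots,\, g_n \cdots g_1)$ from $(e, \ldots, e)$ to $(g_1, \ldots, g_n)$. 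With the equivalence in hand, Corollary \ref{EquivCats_nerve} immediately yields $\nerve(G^{[n]}) \simeq \nerve(G)$, completing the argument.

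The one computational step, and the main (minor) obstacle, is the naturality check for $\eta$. Given a morphism $(\beta_0, \ldots, \beta_n) \colon (g_1, \ldots, g_n) \to (g'_1, \ldots, g'_n)$ in $G^{[n]}$, the commutativity relations $\beta_i g_i = g'_i \beta_{i-1}$ iterate to $\beta_i = (g'_i \cdots g'_1)\, \beta_0\, (g_i \cdots g_1)^{-1}$, from which the naturality square for $\eta$ reduces to the identity $\beta_i \cdot (g_i \cdots g_1) = (g'_i \cdots g'_1) \cdot \beta_0$. Conceptually the whole argument is driven by the fact that any functor from $[n]$ into a groupoid is isomorphic to a constant functor, so that the restriction along $[0] \hookrightarrow [n]$ is an equivalence even though the categories $[0]$ and $[n]$ are not equivalent.
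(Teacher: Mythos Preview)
Your proof is correct and follows essentially the same route as the paper, which defers to the groupoid case (Proposition~\ref{groupoid_thm}): both arguments observe $\iso(G^{[n]}) = G^{[n]}$, exhibit the constant-chain functor $G \to G^{[n]}$, and use the isomorphism $(e, g_1, g_2g_1, \ldots, g_n\cdots g_1)$ to establish the equivalence before invoking Corollary~\ref{EquivCats_nerve}. The only cosmetic difference is that the paper verifies the equivalence via the full/faithful/essentially surjective criterion, while you construct the explicit quasi-inverse and natural isomorphism.
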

	
	In other words, the classifying diagram of a group $G$ is levelwise equivalent to the classifying space of $G$. The proof of the above proposition follows from Proposition \ref{groupoid_thm}.	When describing the classifying diagram in the following sections, we write the levels in terms of classifying spaces of groups.

	Recall that a groupoid is a category in which every morphism is an isomorphism. We acquire the following general result.
	
	\begin{prop} \cite[\S 3.5]{Rezk} \label{groupoid_thm}  \label{Prop_Classifying_Diagram_Groupoid}
		Let $G$ be a groupoid. Then the classifying diagram of $G$ is levelwise equivalent to the classifying space of $G$.  That is, \[(NG)_n\simeq \nerve(G)\]for any $n\geq 0$.
	\end{prop}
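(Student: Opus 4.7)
The plan is to prove the proposition by (i) reducing the $n$th level of the classifying diagram to the nerve of the functor category $G^{[n]}$, and (ii) exhibiting an equivalence of categories $G \simeq G^{[n]}$, after which Corollary \ref{EquivCats_nerve} finishes the job.

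First, since $G$ is a groupoid, every morphism of the functor category $G^{[n]}$ is automatically an isomorphism: a morphism $(\alpha_0,\ldots,\alpha_n)$ of chains has each component $\alpha_i$ invertible in $G$, and the component-wise inverses $(\alpha_0^{-1},\ldots,\alpha_n^{-1})$ assemble into an inverse morphism of chains because the defining commutative squares can be turned around. Hence $\iso(G^{[n]}) = G^{[n]}$ and $(NG)_n = \nerve(G^{[n]})$, so it suffices to prove $\nerve(G^{[n]}) \simeq \nerve(G)$.

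Next, I would construct an equivalence $G \simeq G^{[n]}$ directly. Let $c \colon G \to G^{[n]}$ be the constant-chain functor, sending an object $x$ to $x \xrightarrow{\id} x \xrightarrow{\id} \cdots \xrightarrow{\id} x$, and let $\ev_0 \colon G^{[n]} \to G$ evaluate a chain at its initial vertex. Then $\ev_0 \circ c = \id_G$ on the nose. In the other direction, define a natural transformation $\eta \colon c \circ \ev_0 \Rightarrow \id_{G^{[n]}}$ whose component at a chain $(x_0 \xrightarrow{f_1} x_1 \xrightarrow{f_2} \cdots \xrightarrow{f_n} x_n)$ is the morphism of chains given by the tuple $(\id_{x_0}, f_1, f_2 \circ f_1, \ldots, f_n \circ \cdots \circ f_1)$. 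Because $G$ is a groupoid, every component is an isomorphism, so $\eta$ is a natural isomorphism (once naturality is verified), exhibiting $c$ as an equivalence of categories. Corollary \ref{EquivCats_nerve} then yields $\nerve(G^{[n]}) \simeq \nerve(G)$, completing the argument.

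The only step with any substance is checking naturality of $\eta$: for a morphism $(\alpha_0,\ldots,\alpha_n)$ from $(f_1,\ldots,f_n)$ to $(g_1,\ldots,g_n)$, one must verify the identity $\alpha_k \circ (f_k \circ \cdots \circ f_1) = (g_k \circ \cdots \circ g_1) \circ \alpha_0$ at each level $k$. This follows by induction on $k$ from the defining commutative squares $\alpha_k \circ f_k = g_k \circ \alpha_{k-1}$, and is the only technical point worth flagging as a potential obstacle.
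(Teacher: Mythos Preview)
Your proof is correct and follows essentially the same route as the paper: both reduce to showing the constant-chain functor $G\to G^{[n]}$ is an equivalence of categories, and both use the isomorphism $(\id_{x_0}, f_1, f_2 f_1,\ldots, f_n\cdots f_1)$ as the key ingredient. The only cosmetic difference is that the paper phrases the equivalence via the full/faithful/essentially surjective criterion (for $n=2$), whereas you give the explicit quasi-inverse $\ev_0$ and natural isomorphism $\eta$ for general $n$.
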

	
	\begin{proof}
		Note that $\iso(G^{[n]})=G^{[n]}$ because every morphism in the category $G$ and hence $G^{[n]}$ is an isomorphism. By Corollary \ref{EquivCats_nerve}, it suffices to prove the categories $G$ and $G^{[n]}$ are equivalent. Here we prove that $G$ and $G^{[2]}$ are equivalent, which can be extended to the more general result. Define a functor $\iota:G\to G^{[2]}$  on objects by
		\[
		\begin{tikzcd} 
		x
		\end{tikzcd}
		\begin{tikzcd} 
		\left.\right. \ar[rr, mapsto] && \left.\right.
		\end{tikzcd}	
		\left(
		\begin{tikzcd}
		x \ar[r, "id"] & x \ar[r, "id"] & x
		\end{tikzcd}
		\right)
		\]
		and on morphisms by
		\[
		\begin{tikzcd} 
		x \ar[d, "f"] \\ y
		\end{tikzcd}
		\begin{tikzcd} 
		\left.\right. \ar[rr, mapsto] && \left.\right.
		\end{tikzcd}	
		\begin{tikzcd}
		x \ar[r, "id"] \ar[d, "f"] & x \ar[r, "id"] \ar[d, "f"] & x \ar[d, "f"]\\
		y \ar[r, "id"] & y \ar[r, "id"] & y.
		\end{tikzcd}
		\]
		It is a straightforward observation that $\iota$ is full and faithful,  so we only show that $\iota$ is essentially surjective. Let $x\xrightarrow{g}y \xrightarrow{h}z$ be an arbitrary object in $G^{[2]}$. Note that the diagram
		\[
		\begin{tikzcd}
		x \ar[r, "id"] \ar[d, "id"] & x \ar[r, "id"] \ar[d, "g"] & x \ar[d, "h\circ g"]\\
		x \ar[r, "g"] & y \ar[r, "h"] & z.
		\end{tikzcd}
		\]
		commutes and hence the triple $(id,g,h\circ g)$ defines a morphism from $\iota(x)$ to $x\xrightarrow{g}y \xrightarrow{h}z$ in $G^{[2]}$. In fact, $(id,g,h\circ g)$ is an isomorphism with inverse $(id, g^{-1}, g^{-1}\circ h^{-1})$ because $G$ is groupoid. Thus $\iota$ is essentially surjective.
		Therefore the categories $G$ and $G^{[n]}$ are equivalent.
	\end{proof}
	
	In the following sections we use the following notation.
	
	\begin{notation}
		\begin{enumerate}
			\item We use classifying space notation, $BG$, instead of writing $\nerve(G)$.
			\item Let $G$ and $H$ be categories. We use $G\coprod H$ to denote the category with subcategories $G$ and $H$ such that an object (resp. a morphism) is in $G\coprod H$ if and only if it is an object (resp. a morphism) of $G$ or $H$.
		\end{enumerate}
	\end{notation}

	\begin{obs}\label{B_commutes_with_disjoint_union}
		Let $G_i$ be a category for each $i$. Then \[ \displaystyle{B\left( \amalg_i G_i\right) \simeq \amalg_i B(G_i)}.\]
	\end{obs}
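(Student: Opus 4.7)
The plan is to show that $\nerve$ commutes with disjoint unions of categories on the nose, as simplicial sets; the stated weak equivalence $B\bigl(\amalg_i G_i\bigr) \simeq \amalg_i B(G_i)$ then follows immediately because an isomorphism of simplicial sets is in particular a weak equivalence. Since $\nerve(\cC)_n = \Hom_{Cat}([n], \cC)$ levelwise, the task reduces to exhibiting, naturally in $[n] \in \Delta^{\op}$, a bijection
\[
\Hom_{Cat}\bigl([n], \amalg_i G_i\bigr) \;\cong\; \amalg_i \Hom_{Cat}\bigl([n], G_i\bigr).
\]

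First I would unfold what a functor $F\colon [n] \to \amalg_i G_i$ looks like: it is a choice of object $F(k) \in \amalg_i \ob(G_i)$ for each $0 \leq k \leq n$, together with morphisms $F(k) \to F(k{+}1)$ in $\amalg_i G_i$ for $0 \leq k < n$. The key point is that the category $[n]$ is \emph{connected}, in the strong sense that its objects are linked by a single chain of morphisms, while the disjoint union $\amalg_i G_i$ has no morphisms between objects lying in distinct summands. Therefore the existence of a morphism $F(k) \to F(k{+}1)$ forces $F(k)$ and $F(k{+}1)$ to live in the same summand $G_{i_k}$, and inducting on $k$ shows that all the $F(k)$ lie in one common $G_i$. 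In particular, $F$ factors uniquely through the inclusion $G_i \hookrightarrow \amalg_i G_i$, which gives the desired bijection.

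Next I would verify that this bijection is compatible with the simplicial structure. A morphism $\theta\colon [m] \to [n]$ in $\Delta$ acts by precomposition, and if $F\colon [n] \to \amalg_i G_i$ factors through $G_i$, then $F \circ \theta\colon [m] \to \amalg_i G_i$ factors through the same $G_i$. Hence the isomorphisms on each level assemble into an isomorphism of simplicial sets $\nerve(\amalg_i G_i) \cong \amalg_i \nerve(G_i)$, which is exactly the stated equivalence $B\bigl(\amalg_i G_i\bigr) \simeq \amalg_i B(G_i)$.

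No step in this argument is a serious obstacle; the entire proof is a direct unfolding of definitions. If any care is required, it is in the step asserting that a functor out of $[n]$ cannot ``jump'' between summands, but this is immediate from the two structural facts that $[n]$ has a connecting chain of morphisms through all its objects and that $\amalg_i G_i$ has no cross-summand morphisms.
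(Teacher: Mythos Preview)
Your proposal is correct and follows exactly the paper's reasoning: the paper justifies the observation in a single sentence by noting that any chain of $n$ composable morphisms in $\amalg_i G_i$ must lie entirely in one $G_i$, which is precisely the connectedness-of-$[n]$ argument you spell out. You have simply supplied the details (the levelwise bijection and its naturality) that the paper leaves implicit.
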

	
	The above observation follows from the construction of the $\nerve$ because if we have a chain of $n$ composable morphisms in the category $\amalg_i G_i$, then the chain of morphisms lies only in $G_i$ for some $i$.

	\begin{ex}
	Let $G$ be a group.  We can alternatively think of it as a category, $\cG$, where the objects are the elements of the group, and the morphisms are identities. Then the classifying diagram is levelwise given as \[\left(N\cG\right)_n\simeq \coprod_{\vert \ob(\cG) \vert} B(\{e\}). \]
	\end{ex}

	

	\subsection{Preliminary examples} 
	
	In this section we consider two categories in which the classifying diagram is decomposed into disjoint unions of the classifying space of the trivial group.

	\subsubsection{A finite ordered set}
	
	Consider the category $[m]$, which consists of $(m+1)$ objects $0,1,\ldots,m$ and there exists one morphism $j\to k$ if and only if $j\leq k$.
	
	\begin{prop}
		The $0$th level of the classifying diagram of the category $[m]$ is given by \[N([m])_0\simeq \coprod_{m+1} B(\{e\})\]
		where $\{e\}$ is the trivial group, and the $1$st level of the classifying diagram is given by \[N([m])_1\simeq \coprod_{\frac{1}{2}(m+1)(m+2)} B(\{e\}).\]
	\end{prop}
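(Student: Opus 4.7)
The plan is to unwind the definition $N([m])_n \defeq \nerve(\iso([m]^{[n]}))$ for $n=0,1$ and exploit the fact that $[m]$ is a poset, so its only isomorphisms are identities. Once that is established, $\iso([m]^{[n]})$ will be a discrete category (a disjoint union of copies of the trivial group $\{e\}$), and the conclusion will follow immediately from Observation \ref{B_commutes_with_disjoint_union}.

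First I would handle $n=0$. The category $[m]^{[0]}$ is just $[m]$. Since $[m]$ is a poset, between any two distinct objects $j<k$ there is a unique morphism $j\to k$ and no morphism in the reverse direction, so this morphism is not an isomorphism. Hence $\iso([m])$ has the same $m+1$ objects as $[m]$ but only the identity morphisms, i.e.\ $\iso([m])=\coprod_{m+1}\{e\}$ as a category. Applying $\nerve$ and using Observation \ref{B_commutes_with_disjoint_union} gives $N([m])_0 \simeq \coprod_{m+1} B(\{e\})$.

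Next I would handle $n=1$. Objects of $[m]^{[1]}$ are morphisms of $[m]$, which correspond bijectively to pairs $(j,k)$ with $0\le j\le k\le m$; there are $\binom{m+2}{2}=\tfrac12(m+1)(m+2)$ such pairs. A morphism from $(j\to k)$ to $(j'\to k')$ in $[m]^{[1]}$ is a commuting square with vertical arrows $\alpha_0:j\to j'$ and $\alpha_1:k\to k'$. For this morphism to be an isomorphism in $[m]^{[1]}$ I need $\alpha_0$ and $\alpha_1$ to be isomorphisms in $[m]$. As above, the only isomorphisms in $[m]$ are identities, forcing $j=j'$, $k=k'$, and $\alpha_0=\id_j$, $\alpha_1=\id_k$. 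Therefore $\iso([m]^{[1]})$ is again a disjoint union of trivial groups, one for each of the $\tfrac12(m+1)(m+2)$ objects, and another application of $\nerve$ together with Observation \ref{B_commutes_with_disjoint_union} yields the claimed description of $N([m])_1$.

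There is no real obstacle here; the only subtlety is just making sure one verifies that no morphism $j\to k$ in $[m]$ with $j<k$ has an inverse (so that $\iso([m])$ collapses to identities), and carefully counting morphisms of $[m]$ to get the binomial coefficient $\binom{m+2}{2}$ for the number of objects of $[m]^{[1]}$.
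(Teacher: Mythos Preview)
Your proposal is correct and follows essentially the same argument as the paper: both identify that $[m]$ has only identity isomorphisms, deduce that $\iso([m]^{[n]})$ is discrete for $n=0,1$, and count the objects. The only cosmetic difference is that the paper counts the morphisms of $[m]$ via $\sum_{k=0}^{m}(k+1)$ whereas you use the binomial coefficient $\binom{m+2}{2}$, which of course give the same number.
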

	
	\begin{proof}
		To compute the $0$th level, we need to consider $\iso([m])$. Since the only isomorphisms in $[m]$ are identities, $\iso([m])$ consists of $m+1$ objects and only the identity morphisms. Thus \[N([m])_0\simeq \coprod_{m+1} B(\{e\}).\]
		Now to compute the $1$st level, consider the functor category $\iso([m]^{[1]})$. The number of objects in this functor category is the same as the number of morphisms in the category $[m]$.  Observe that each object $k$ in $[m]$ has $k+1$ morphisms mapping into it, and hence the number of morphisms in the category $[m]$ is given by \[\sum_{k=0}^{m}(k+1)=\frac{(m+1)(m+2)}{2}.\]
		Since the only isomorphisms in $[m]$ are identities, the only isomorphisms in $[m]^{[1]}$ are also only given by identities. Thus \[N([m])_0\simeq \coprod_{\frac{1}{2}(m+1)(m+2)} B(\{e\}).\]
	\end{proof}
	
	\subsubsection{The category of finite ordered sets}
	Consider the category of finite ordered sets $\Delta$. That is, the objects are given by finite ordered sets $[m]=\{0\leq 1\leq 2\leq \cdots \leq m\}$ and the morphisms are order preserving functions.
	
	\begin{prop}\label{N_Delta}
		The $0$th and $1$st levels of the classifying diagram of $\Delta$ are 
		both weakly equivalent to a disjoint union of countably many contractible spaces,
		but $N\Delta$ is not weakly equivalent to the constant simplicial space $\coprod_\mathbb{N} B(\{e\})$.
	\end{prop}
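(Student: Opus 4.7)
The plan has three steps: compute $(N\Delta)_0$ and $(N\Delta)_1$, and then distinguish $N\Delta$ from the constant simplicial space. The key basic observation, which I will establish at the outset, is that the only isomorphisms in $\Delta$ are identities: an order-preserving bijection $[m]\to[n]$ forces $m=n$ and must fix each element. Hence $\iso(\Delta^{[n]})$ has only identity morphisms for every $n$. For $n=0$, $\iso(\Delta^{[0]})=\iso(\Delta)$ is the discrete category on the countable set $\{[m]\mid m\ge 0\}$, so $(N\Delta)_0 \simeq \coprod_\mathbb{N} B(\{e\})$. For $n=1$, the objects of $\iso(\Delta^{[1]})$ are the (countably many) morphisms of $\Delta$, and a morphism between two such objects is a pair of isomorphisms filling a commutative square, which again forces identities; so $\iso(\Delta^{[1]})$ is discrete on $\mathrm{Mor}(\Delta)$ and $(N\Delta)_1 \simeq \coprod_\mathbb{N} B(\{e\})$.

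To show $N\Delta$ is not Reedy weakly equivalent to the constant simplicial space $\coprod_\mathbb{N} B(\{e\})$, I plan to apply $\pi_0$ at each level. Since weak equivalences of simplicial sets induce bijections on $\pi_0$, any Reedy weak equivalence of simplicial spaces yields a map of simplicial sets that is a levelwise bijection, that is, an isomorphism. A zigzag of Reedy weak equivalences would therefore yield isomorphic simplicial sets after this construction, so it suffices to exhibit a difference at this level.

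Applying $\pi_0$ levelwise to the constant simplicial space yields the constant simplicial set on $\mathbb{N}$, whose face and degeneracy maps are all identities. Applying it to $N\Delta$ yields $\nerve(\Delta)$: the set $\pi_0((N\Delta)_n)$ consists of isomorphism classes of objects of $\Delta^{[n]}$, which equals the set of objects since the only isomorphisms are identities, and this set is precisely $\nerve(\Delta)_n$ with its usual face and degeneracy structure. Now $\nerve(\Delta) \not\cong \const(\mathbb{N})$: for any morphism $f\colon[m]\to[n]$ in $\Delta$ with $m\neq n$ we have $d_0(f)=[n]\neq[m]=d_1(f)$, whereas the face maps on $\const(\mathbb{N})$ are both the identity on $\mathbb{N}$.

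The only nonroutine step is the middle one — observing that applying $\pi_0$ level by level is a homotopy invariant that turns a Reedy weak equivalence into an honest isomorphism (not merely a weak equivalence) of simplicial sets, so that a zigzag of Reedy weak equivalences yields isomorphic, not merely weakly equivalent, simplicial sets. Everything else is bookkeeping about the absence of nontrivial isomorphisms in $\Delta$.
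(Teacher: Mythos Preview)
Your argument is correct and follows the same outline as the paper: both proofs use that the only isomorphisms in $\Delta$ are identities to identify $(N\Delta)_0$ and $(N\Delta)_1$ with countable disjoint unions of points, and both distinguish $N\Delta$ from the constant simplicial space by examining the face maps on components.

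The only noteworthy difference is in how the final step is packaged. The paper works directly with the simplicial space: it labels the components $B(\{e\})_{[n]}$ and $B(\{e\})_{f:[n]\to[m]}$, observes that each $B(\{e\})_{[n]}$ has a countably infinite $d_1$-preimage in $(N\Delta)_1$ (whereas in the constant simplicial space $d_1$ is the identity and preimages are singletons), and concludes. Your route is to apply $\pi_0$ levelwise, note that this sends Reedy weak equivalences to isomorphisms of simplicial sets (and hence is invariant under zigzags), identify the resulting simplicial sets as $\nerve(\Delta)$ and $\const(\mathbb{N})$, and separate these by the isomorphism-invariant property ``$d_0=d_1$''. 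Your version is a bit more explicit about why the distinction survives a zigzag of Reedy weak equivalences, which the paper leaves implicit; the paper's version has the virtue of staying concrete with the labeled components. Either invariant (fiber cardinality of $d_1$ versus equality of $d_0$ and $d_1$) suffices.
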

	
	\begin{proof}
		Note that the only isomorphisms in $\Delta$ are the identity morphisms, hence, since the objects of $\Delta$ are in bijection with the natural numbers, we get \[N(\Delta)_0\simeq \coprod_\mathbb{N} B(\{e\}).\]
		For the first level of the classifying diagram, the objects in $\iso(\Delta^{[1]})$ are morphisms $f:[n]\to [m]$ in $\Delta$, and the morphisms in the category $\iso(\Delta^{[1]})$ are given by pairs of isomorphisms $(\alpha, \beta)$ making the diagram
		\[ 
		\begin{tikzcd}
		\left[n\right] \ar[r, "\alpha"] \ar[d, "f"'] & \left[n\right] \ar[d, "g"] \\
		\left[m\right] \ar[r, "\beta"] & \left[m\right]
		\end{tikzcd}
		\]
		commute in $\Delta$. Since the only isomorphisms in $\Delta$ are identities, the only morphisms in the category $\iso(\Delta^{[1]})$ are given by pairs of identities. For any given pair of natural numbers, $n$ and $m$, there is only a finite number of morphisms $f:[n]\to [m]$ in $\Delta$. Also, we have already observed that there are a countable number of objects in $\Delta$.  Thus there is a countable number of morphisms in $\Delta$ and hence we get our desired result
		\[N(\Delta)_1\simeq \coprod_{\mathbb{N}} B(\{e\}).\]
		
		It remains to show that $N\Delta$ is not the constant simplicial space $\coprod_{\mathbb{N}} B(\{e\})$. For each $[n]$, let $B(\{e\})_{[n]}$ denote the copy of $B(\{e\})$ in $N(\Delta)_0$ corresponding to $[n]$. Similarly, let $B(\{e\})_{f:[n]\to[m]}$ denote the copy of $B(\{e\})$ in $N(\Delta)_1$ corresponding to the morphism $f:[n]\to [m]$ in $\Delta$. The diagram 
		\[
		\begin{tikzcd}
		& B(\{e\})_{f:[n]\to[m]}\ar[dl,"d_1"'] \ar[dr,"d_0"] &\\
		B(\{e\})_{[n]} & & B(\{e\})_{[m]}
		\end{tikzcd}
		\]
		shows how the face maps $d_0,d_1:(N\Delta)_1\to (N\Delta)_0$ interact for a given morphism $f:[n]\to [m]$. In particular, since  $m$ can be any nonnegative integer, we have a countable (not finite) collection of copies of $B(\{e\})$ in $(N\Delta)_1$ that map via $d_1$ to $B(\{e\})_{[n]}$ for each $n$. Also note that $s_0(B(\{e\})_{[n]})=B(\{e\})_{\id:[n]\to [n]}$. Thus 
		 $N\Delta$ is not weakly equivalent to the constant simplicial space.
	\end{proof}
	
	In Proposition \ref{Prop_Classifying_Diagram_Groupoid}, we saw that the classifying space of a groupoid $G$ is the weakly equivalent to the constant simplicial space $BG$; in particular $(NG)_0\simeq (NG)_1\simeq BG$ and the face/degeneracy maps are essentially identities. In contrast, we just proved that the $0$th and $1$st levels of $N\Delta$ are both countable disjoint unions of contractible spaces, but the face/degeneracy maps are not essentially identities. It is not a surprise that $N\Delta$ is not a constant simplicial space because $\Delta$ is not a groupoid.
	
	\subsection{The classifying diagram and the discrete simplicial space given by the nerve}
	
	In Proposition \ref{N_Delta}, the structures of $0$th and $1$st levels of $N\Delta$ are reminiscent of the $0$th and $1$st levels of the discrete simplicial space $\nerve(\Delta)^t$. In the following proposition, we provide the necessary and sufficient conditions on a category $\cC$ to guarantee  $N\cC$ and $\nerve(\cC)^t$ are isomorphic simplicial spaces; as a consequence $N\Delta$ is isomorphic to $\nerve(\Delta)^t$. 

		\begin{prop}
		The classifying diagram of a category $\cC$ is isomorphic to the discrete simplicial space $\nerve(\cC)^t$ if and only if  $\iso(\cC)$ is discrete.
	\end{prop}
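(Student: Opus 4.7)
The plan is to unpack what the two simplicial spaces look like levelwise and then reduce the ``if and only if'' to a single calculation at level zero. Recall that $(N\cC)_n = \nerve(\iso(\cC^{[n]}))$ is the nerve of a category whose objects are chains of $n$ composable morphisms in $\cC$ (which is exactly the set $\nerve(\cC)_n = \Hom_{Cat}([n],\cC)$) and whose morphisms are $(n+1)$-tuples of isomorphisms $(\alpha_0,\ldots,\alpha_n)$ of $\cC$ fitting into a commutative ladder. On the other side, $\nerve(\cC)^t_n = \const(\nerve(\cC)_n)$ is by definition the discrete (constant) simplicial set on $\nerve(\cC)_n$. So on objects, the identification $\ob(\iso(\cC^{[n]})) = \nerve(\cC)_n$ is already canonical; the only question is whether the nontrivial morphisms disappear.

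For the forward implication, I would assume $\iso(\cC)$ is discrete, i.e.\ the only isomorphisms in $\cC$ are identities. Given any morphism $(\alpha_0,\ldots,\alpha_n)$ in $\iso(\cC^{[n]})$, every $\alpha_i$ is an isomorphism in $\cC$ by definition of the isomorphism subcategory of a functor category, hence each $\alpha_i$ is an identity. Therefore $\iso(\cC^{[n]})$ is a discrete category with object set $\nerve(\cC)_n$, and its nerve is the constant simplicial set $\const(\nerve(\cC)_n) = \nerve(\cC)^t_n$. These level isomorphisms are natural in $[n]$ since they are induced by the identity on the set of objects, so they assemble into an isomorphism $N\cC \cong \nerve(\cC)^t$ of simplicial spaces.

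For the converse, I would argue contrapositively, or more directly by restricting to level zero. If $N\cC \cong \nerve(\cC)^t$, then in particular $(N\cC)_0 = \nerve(\iso(\cC^{[0]})) = \nerve(\iso(\cC))$ is isomorphic to $\nerve(\cC)^t_0 = \const(\ob \cC)$. The nerve of a category is a constant simplicial set on its object set exactly when the category has no non-identity morphisms (otherwise there is a nondegenerate $1$-simplex), so $\iso(\cC)$ must be discrete. The hardest step should be nothing more than unwinding the definition of $\iso(\cC^{[n]})$ and confirming the morphism-by-morphism identification is compatible with face and degeneracy maps; once that bookkeeping is done, the statement is essentially immediate.
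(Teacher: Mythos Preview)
Your proof is correct and follows essentially the same approach as the paper: both arguments reduce to the observation that $\iso(\cC^{[n]})$ is a discrete category (with object set $\nerve(\cC)_n$) if and only if $\iso(\cC)$ is discrete, so that taking nerves levelwise yields the constant simplicial sets $\const(\nerve(\cC)_n)=\nerve(\cC)^t_n$. The only minor difference is that for the converse the paper runs the full biconditional chain at every bidegree $(n,m)$, whereas you extract the conclusion already from level $n=0$ by noting that $\nerve(\iso(\cC))\cong\const(\ob\cC)$ forces $\iso(\cC)$ to have no non-identity morphisms; this is a small streamlining but not a genuinely different route.
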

	
	\begin{proof}
	Recall that $\nerve(\cC)^t_n\defeq\const (\nerve(\cC)_n)$ is the simplicial set given by the set $\nerve(\cC)_n$ at each level in which the face and degeneracy maps are identities. The only morphisms in $\iso(\cC^{[n]})$ are identities if and only if the only isomorphisms in $\cC$ are identities.
		 Thus any functor $[m]\to \iso(\cC^{[n]})$ maps $[m]$ to a chain of length $m$ of identity morphisms for an object in $\iso(\cC^{[n]})$ if and only if the only isomorphisms in $\cC$ are identities;  hence $\Hom_{Cat}([m],\iso(\cC^{[n]}))\cong \ob(\iso(\cC^{[n]}))$ if and only if the only isomorphisms in $\cC$ are identities. Therefore
		\begin{align*}
		N(\cC)_{n,m}&\defeq \nerve(\iso(\cC^{[n]}))_m\\
		&=\Hom_{Cat}([m],\iso(\cC^{[n]}))\\
		&\cong \ob(\iso(\cC^{[n]}))\\
		&\cong \ob (\cC^{[n]})\\
		&\cong \Hom_{Cat}([n],\cC)\\
		&=\const(\Hom_{Cat}([n],\cC))_m\\
		&\eqdef \const (\nerve (\cC)_n)_m
		\end{align*}
		if and only the only isomorphisms in $\cC$ are identities, which gives the desired result.
	\end{proof}
	
	\subsection{The stabilizer characterization of the classifying diagram}\label{Section_stabilizer_characterization}
	
	In Proposition \ref{Bergner 7.2}, the $0$th level of the classifying diagram is written in terms of classifying spaces of automorphism classes of the category.  One way to extend Proposition \ref{Bergner 7.2} to the higher levels of the classifying diagram is to describe the higher levels in terms of stabilizers of products of automorphism groups.  We begin the section by recalling the definition of a stabilizer and the fact that it is a group.
	
	
	\begin{prop}\cite[II.4.2]{Hungerford}
		Let $G$ be a group that acts on a set $S$.
		\begin{enumerate}
			\item The relation on $S$ defined by 
			\[x\sim x' \iff g \cdot x=x' \mbox{ for some } g\in G \]
			is an equivalence relation.
			\item The stabilizer for some $x\in S$, $G_x=\{ g\in G \vert g\cdot x=x\}$, is a subgroup of $G$.
		\end{enumerate}
	\end{prop}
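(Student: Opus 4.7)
The plan is to verify each part of the proposition by unpacking the definition of a group action, namely that $e\cdot x = x$ for all $x\in S$ and $(gh)\cdot x = g\cdot(h\cdot x)$ for all $g,h\in G$ and $x\in S$. Both parts reduce to routine calculations with no real obstacle; the main thing is just to be systematic.

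For part (i), I would check the three defining properties of an equivalence relation in turn. Reflexivity follows from $e\cdot x = x$, which gives $x\sim x$ by taking $g=e$. Symmetry follows from the fact that $g\cdot x = x'$ implies $g^{-1}\cdot x' = g^{-1}\cdot (g\cdot x) = (g^{-1}g)\cdot x = e\cdot x = x$, so $x'\sim x$. Transitivity follows from composing: if $g\cdot x = x'$ and $h\cdot x' = x''$, then $(hg)\cdot x = h\cdot(g\cdot x) = h\cdot x' = x''$, so $x\sim x''$.

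For part (ii), I would verify the subgroup axioms for $G_x$ using the same identities. The identity lies in $G_x$ because $e\cdot x = x$. Closure under multiplication holds because if $g,h\in G_x$, then $(gh)\cdot x = g\cdot(h\cdot x) = g\cdot x = x$, so $gh\in G_x$. Closure under inverses holds because if $g\in G_x$ then $g^{-1}\cdot x = g^{-1}\cdot(g\cdot x) = (g^{-1}g)\cdot x = e\cdot x = x$, so $g^{-1}\in G_x$. Since $G_x$ is a nonempty subset of $G$ closed under the group operation and inverses, it is a subgroup.

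The hard part, such as it is, is simply resisting the urge to write more than this, since every step is a direct application of the group action axioms; I would present the proof as two short bullet lists, one per part, with the computations displayed inline.
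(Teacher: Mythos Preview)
Your proof is correct and complete; every step is a direct application of the group action axioms, exactly as it should be. The paper itself does not supply a proof of this proposition at all---it is simply cited from Hungerford \cite[II.4.2]{Hungerford} as a standard result and used as background for the stabilizer characterization that follows---so your write-up in fact goes beyond what the paper provides.
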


	In an category $\cC$, the automorphisms on an object form a group. The following proposition uses automorphisms and hom-sets in a category to define a group action.
	
	\begin{prop}\label{Prop_Group_action_bullet}
		Let $x_0,x_1\ldots,x_n$ be objects in the category $\cC$. Given the $(n+1)$-tuple $\underline{x}=(x_0,\ldots,x_n)$, consider the group \[Aut(\underline{x})\defeq Aut(x_0) \times Aut(x_1)\times \cdots Aut(x_n)\] and the set 
		\[\Hom(\underline{x})\defeq \Hom_{\cC}(x_0,x_1)\times \Hom_{\cC}(x_1,x_2)\times \cdots \times \Hom_{\cC}(x_{n-1},x_n).\] The map
		\[ 
		\begin{tikzcd}
		\bullet : Aut(\underline{x}) \times \Hom(\underline{x}) \ar[r]  & \Hom(\underline{x}) \\
		\left( (\alpha_0,\ldots,\alpha_n), (f_1,\ldots,f_n) \right) \ar[r, mapsto] & \left( \alpha_1f_1\alpha_0^{-1}, \ldots, \alpha_nf_n\alpha_{n-1}^{-1}  \right),
		\end{tikzcd}
		\] 
		where each $\alpha_i$ is an automorphism on $x_i$ and $f_i:x_{i-1}\to x_i$ is a morphism in $\cC$, defines a group action. 
	\end{prop}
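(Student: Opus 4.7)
The plan is to verify the three conditions required of a group action: that the map $\bullet$ lands in $\Hom(\underline{x})$, that the identity of $\Aut(\underline{x})$ acts trivially, and that the action is compatible with the group multiplication of $\Aut(\underline{x})$. All three are essentially index bookkeeping together with the associativity of composition in $\cC$, so I expect no conceptual obstacle; the only subtlety will be keeping the indices aligned so that sources and targets of the composed morphisms match.

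First I would check well-definedness. Given $(\alpha_0,\ldots,\alpha_n) \in \Aut(\underline{x})$ and $(f_1,\ldots,f_n)\in\Hom(\underline{x})$, the $i$th entry $\alpha_i f_i \alpha_{i-1}^{-1}$ is the composition $x_{i-1} \xrightarrow{\alpha_{i-1}^{-1}} x_{i-1} \xrightarrow{f_i} x_i \xrightarrow{\alpha_i} x_i$, which lies in $\Hom_{\cC}(x_{i-1},x_i)$. Hence the tuple of such compositions lies in $\Hom(\underline{x})$.

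Next I would verify the identity axiom. The identity element of $\Aut(\underline{x})$ is $(\id_{x_0},\ldots,\id_{x_n})$, and since $\id_{x_i}^{-1}=\id_{x_i}$, the $i$th entry of $(\id_{x_0},\ldots,\id_{x_n})\bullet (f_1,\ldots,f_n)$ is $\id_{x_i}\circ f_i \circ \id_{x_{i-1}} = f_i$, so the identity acts trivially.

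Finally I would check compatibility. For $\underline{\alpha}=(\alpha_0,\ldots,\alpha_n)$ and $\underline{\beta}=(\beta_0,\ldots,\beta_n)$ in $\Aut(\underline{x})$, the product in the direct product group is $\underline{\alpha}\cdot\underline{\beta}=(\alpha_0\beta_0,\ldots,\alpha_n\beta_n)$. Writing $\underline{f}=(f_1,\ldots,f_n)$, the $i$th entry of $(\underline{\alpha}\cdot\underline{\beta})\bullet \underline{f}$ is $(\alpha_i\beta_i)\, f_i\, (\alpha_{i-1}\beta_{i-1})^{-1} = \alpha_i\beta_i f_i \beta_{i-1}^{-1}\alpha_{i-1}^{-1}$, using that inversion reverses order in the group $\Aut(x_{i-1})$. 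On the other hand, the $i$th entry of $\underline{\alpha}\bullet(\underline{\beta}\bullet \underline{f})$ is $\alpha_i(\beta_i f_i \beta_{i-1}^{-1})\alpha_{i-1}^{-1}$. By associativity of composition in $\cC$ these agree, completing the proof. The main obstacle, as noted above, is just keeping the indexing of sources and targets consistent; everything else is formal.
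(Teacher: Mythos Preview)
Your proof is correct and follows essentially the same approach as the paper: both verify the identity and compatibility axioms by computing the $i$th output coordinate and invoking associativity of composition. The paper phrases this as reducing to the coordinate action of $Aut(x_{i-1})\times Aut(x_i)$ on $\Hom_{\cC}(x_{i-1},x_i)$, whereas you work directly with the full tuples, and you additionally include an explicit well-definedness check that the paper omits; these are cosmetic differences only.
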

	
	\begin{proof}
		It suffices to check that $\bullet$ is a group action on each coordinate of $\Hom(\underline{x})$.  In other words, we show that the map
		\[
		\begin{tikzcd}
		\cdot : \left( Aut(x_{i-1})\times Aut(x_i) \right) \times \Hom_{\cC}(x_{i-1}, x_i) \ar[r] & \Hom_{\cC}(x_{i-1}, x_i)\\
		\left( (\alpha_{i-1},\alpha_i), f_i \right) \ar[r, mapsto] & \alpha_if_i\alpha_{i-1}^{-1}
		\end{tikzcd}
		\] 
		defines a group action for $1\leq i \leq n$. Note that $(\id_{x_{i-1}},\id_{x_i})\cdot f_i=\id_{x_i} f_i \id_{x_{i-1}}=f_i$. So it remains to show the compatibility of the action. To see that the action is compatible, observe that
		\begin{align*}
		\left( (\alpha_{i-1},\alpha_i)(\beta_{i-1},\beta_i) \right) \cdot f_i =& (\alpha_{i-1}\beta_{i-1},\alpha_i\beta_i) \cdot f_i\\
		=& \left(\alpha_i\beta_i \right) f_i \left( \alpha_{i-1}\beta_{i-1} \right)^{-1}\\
		=& \alpha_i\beta_i f_i \beta_{i-1}^{-1} \alpha_{i-1}^{-1}\\
		=& \alpha_i \left[ ( \beta_{i-1},\beta_i) \cdot f_i\right] \alpha_{i-1}^{-1}\\
		=& (\alpha_{i-1},\alpha_i) \cdot \left[ ( \beta_{i-1},\beta_i) \cdot f_i\right].
		\end{align*}
		Thus, since $\bullet$ is defined coordinate-wise and we have shown that $\cdot$ defines the action on each coordinate, $\bullet$ defines an action of $Aut(\underline{x})$ on $\Hom(\underline{x})$.
	\end{proof}
	
	Before we use $\bullet$ to describe the higher levels of the classifying diagram, we first provide the known description of the $0$th level the served as inspiration.
	
	\begin{prop}\cite[\S 7.2]{Bergner} \label{Bergner 7.2}
		Let $\cC$ be a category. For a given object $x$, let $\langle x \rangle$ denote its isomorphism equivalence class. Then		
		\[N(\cC)_0\simeq \coprod_{\langle x \rangle} B(Aut(x)).\]
	\end{prop}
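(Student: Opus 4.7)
The plan is to unwind the definition and then reduce to Corollary \ref{EquivCats_nerve} and Observation \ref{B_commutes_with_disjoint_union}. By definition, $N(\cC)_0 = \nerve(\iso(\cC^{[0]})) = \nerve(\iso(\cC))$, so it suffices to analyze the nerve of the maximal subgroupoid $\iso(\cC)$. My first step will be to note that $\iso(\cC)$ decomposes as a disjoint union of its connected components: two objects $x$ and $x'$ lie in the same component precisely when they are isomorphic in $\cC$, that is, when $\langle x \rangle = \langle x' \rangle$. Writing $\iso(\cC)_{\langle x \rangle}$ for the full subgroupoid on the objects of $\langle x \rangle$, I get
\[ \iso(\cC) \;=\; \coprod_{\langle x \rangle} \iso(\cC)_{\langle x \rangle}, \]
and then Observation \ref{B_commutes_with_disjoint_union} lets me pass the nerve through the coproduct.

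Next I would show that for each isomorphism class, the category $\iso(\cC)_{\langle x \rangle}$ is equivalent to the one-object groupoid $Aut(x)$. The natural inclusion $\iota : Aut(x) \hookrightarrow \iso(\cC)_{\langle x \rangle}$ sending the unique object to $x$ and each automorphism to itself is fully faithful essentially tautologically, since $\Hom_{Aut(x)}(\ast,\ast) = Aut(x) = \Hom_{\iso(\cC)}(x,x)$. It is essentially surjective because every object of $\iso(\cC)_{\langle x \rangle}$ is by definition isomorphic to $x$ in $\cC$, and the connecting isomorphism lives in $\iso(\cC)$. Thus $\iota$ is an equivalence of categories, and Corollary \ref{EquivCats_nerve} yields a weak equivalence $\nerve(\iso(\cC)_{\langle x \rangle}) \simeq \nerve(Aut(x)) = B(Aut(x))$.

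Combining these two steps gives
\[ N(\cC)_0 \;=\; \nerve(\iso(\cC)) \;\simeq\; \coprod_{\langle x \rangle} \nerve(\iso(\cC)_{\langle x \rangle}) \;\simeq\; \coprod_{\langle x \rangle} B(Aut(x)), \]
which is the claim. The only substantive point is the equivalence $\iota$, and even that is a standard ``connected groupoids are one-object'' argument; no serious obstacle is expected. One minor bookkeeping point to be careful about is that a choice of representative $x$ in each class is used to define $\iota$, but since different choices of representative produce conjugate (hence isomorphic) automorphism groups with equivalent classifying spaces, this does not affect the statement.
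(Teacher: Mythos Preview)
Your proposal is correct and follows essentially the same approach as the paper: establish an equivalence between $\iso(\cC^{[0]})$ and $\coprod_{\langle x\rangle} Aut(x)$, invoke Corollary \ref{EquivCats_nerve}, and use Observation \ref{B_commutes_with_disjoint_union} to distribute the nerve over the coproduct. The paper's version is terser---it simply asserts the equivalence of categories without spelling out the connected-component decomposition or the fully-faithful/essentially-surjective check---and it applies the two cited results in the opposite order, but the content is the same.
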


	\begin{proof}
		Note the categories $\iso(\cC^{[0]})$ and $\coprod_{\langle x \rangle} Aut(x)$ are equivalent. By Corollary \ref{EquivCats_nerve}, $N(\cC)_0\simeq B\left( \coprod_{\langle x \rangle} Aut(x) \right)$. Applying Observation \ref{B_commutes_with_disjoint_union} gives the desired result.
		
	\end{proof}
	
	Moving onto to higher levels, it was previously shown that the $1$st level of the classifying diagram can be written in terms of automorphisms of morphisms.
	
	\begin{prop} \cite[\S 7.2]{Bergner}
		Let $\cC$ be a category.  Then \[N(\cC)_1\simeq \coprod_{\langle x \rangle , \langle y \rangle} \coprod_{\langle \alpha:x\to y \rangle} B(Aut(\alpha))\] where $\langle \alpha:x\to y \rangle$ is the automorphism class of the morphism $\alpha:x\to y$ in $\cC$.
	\end{prop}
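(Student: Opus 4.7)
The plan is to mimic the proof of Proposition \ref{Bergner 7.2} almost verbatim, promoting the one-slot analysis there to a two-slot analysis based on the group action $\bullet$ of Proposition \ref{Prop_Group_action_bullet} in the case $n=1$.

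First I would unpack $(N\cC)_1 \defeq \nerve(\iso(\cC^{[1]}))$. An object of $\iso(\cC^{[1]})$ is a morphism $\alpha\colon x\to y$ in $\cC$, and a morphism from $\alpha$ to $\alpha'\colon x'\to y'$ is a commuting square built from isomorphisms $\sigma\colon x\xrightarrow{\cong} x'$ and $\tau\colon y\xrightarrow{\cong} y'$ with $\alpha'\sigma=\tau\alpha$; in particular, any such morphism forces $\langle x\rangle = \langle x'\rangle$ and $\langle y\rangle=\langle y'\rangle$ in $\cC$. So $\iso(\cC^{[1]})$ is a groupoid that decomposes as a disjoint union of full sub-groupoids indexed by ordered pairs of isomorphism classes $(\langle x\rangle,\langle y\rangle)$ of objects of $\cC$.

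Next, within the $(\langle x\rangle,\langle y\rangle)$-block I would identify the connected components using the $\bullet$-action of Proposition \ref{Prop_Group_action_bullet}. With $\underline{x}=(x,y)$, this action is the action of $Aut(x)\times Aut(y)$ on $\Hom_\cC(x,y)$ sending $\bigl((\sigma,\tau),\alpha\bigr)$ to $\tau\alpha\sigma^{-1}$, whose orbits are precisely the isomorphism classes $\langle \alpha\colon x\to y\rangle$, and the stabilizer of a representative $\alpha$ is exactly the automorphism group $Aut(\alpha)$ of $\alpha$ in $\iso(\cC^{[1]})$. Each component is a connected groupoid, hence equivalent to any of its one-object full sub-groupoids; assembling these equivalences over all components yields an equivalence of categories
\[
\iso(\cC^{[1]}) \;\simeq\; \coprod_{\langle x\rangle,\langle y\rangle}\;\coprod_{\langle \alpha\colon x\to y\rangle} Aut(\alpha).
\]
Applying $\nerve$ together with Corollary \ref{EquivCats_nerve} and Observation \ref{B_commutes_with_disjoint_union} then delivers the claimed description.

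The main obstacle is the bookkeeping for the iterated coproduct: one must verify that two morphisms $\alpha\colon x\to y$ and $\alpha'\colon x'\to y'$ are isomorphic in $\cC^{[1]}$ if and only if one can choose isomorphisms $x\cong x'$ and $y\cong y'$ transporting $\alpha$ to $\alpha'$, and hence, after fixing chosen representatives of $\langle x\rangle$ and $\langle y\rangle$, if and only if they lie in the same $Aut(x)\times Aut(y)$-orbit on $\Hom_\cC(x,y)$. This is the same transport argument implicit in Proposition \ref{Bergner 7.2}, now applied simultaneously in the source and target slots.
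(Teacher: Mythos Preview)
Your argument is correct. Note, however, that the paper does not supply its own proof of this proposition: it is quoted from \cite[\S 7.2]{Bergner} and left unproved, serving instead as motivation for Theorem \ref{Stabilizer_perspective}. Your proof is precisely the $n=1$ instance of the paper's proof of that theorem --- you show $\iso(\cC^{[1]})$ is equivalent to the disjoint union of stabilizers under the action $\bullet$, and then identify each stabilizer $Aut(\underline{x})_{\alpha}$ with $Aut(\alpha)$ --- so it is entirely in line with the paper's methods.
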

	
	We use the action $\bullet$ to form a new characterization of the higher levels for the classifying diagram by using stabilizers under the action.
	
	\begin{thm}\label{Stabilizer_perspective}
		%
		%
		Let $\langle f_1,\ldots,f_n \rangle$ denote the equivalence class of $( f_1,\ldots,f_n)\in \Hom(\underline{x})$ defined by the group action $\bullet$. Then for $n\geq 1$,
		\[ N(\cC)_n \simeq  \coprod_{\langle f_1,\ldots,f_n \rangle} B\left[ Aut(\underline{x})_{(f_1,\ldots,f_n)} \right] \]
		where $Aut(\underline{x})_{(f_1,\ldots,f_n)}$ is the stabilizer of $(f_1,\ldots,f_n)$.
	\end{thm}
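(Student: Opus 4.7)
The plan is to show that the groupoid $\iso(\cC^{[n]})$ is equivalent to a disjoint union of one-object groupoids, and then apply the nerve functor. Once such a decomposition is established, the result follows from Corollary \ref{EquivCats_nerve} and Observation \ref{B_commutes_with_disjoint_union}, exactly as in the proof of Proposition \ref{Bergner 7.2} but one dimension higher.

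First I would identify the automorphism group of a chain in $\iso(\cC^{[n]})$ with the stabilizer under $\bullet$. Fix $(f_1,\ldots,f_n) \in \Hom(\underline{x})$ and let $(\alpha_0,\ldots,\alpha_n)$ be a self-isomorphism of it in $\iso(\cC^{[n]})$. Each $\alpha_i$ is then an automorphism of $x_i$, so $(\alpha_0,\ldots,\alpha_n) \in Aut(\underline{x})$, and the commuting-square conditions $\alpha_i f_i = f_i \alpha_{i-1}$ rewrite as $\alpha_i f_i \alpha_{i-1}^{-1} = f_i$, i.e.\ $(\alpha_0,\ldots,\alpha_n) \bullet (f_1,\ldots,f_n) = (f_1,\ldots,f_n)$. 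Hence the automorphism group of $(f_1,\ldots,f_n)$ in $\iso(\cC^{[n]})$ is exactly $Aut(\underline{x})_{(f_1,\ldots,f_n)}$.

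Next I would enumerate the isomorphism classes of objects of $\iso(\cC^{[n]})$. Fix a set of representatives $\underline{x}$ for the componentwise-isomorphism classes of $(n+1)$-tuples of objects in $\cC$. Any chain $(g_1,\ldots,g_n)$ whose source $\underline{y}$ is componentwise isomorphic to some representative $\underline{x}$ can be transported into $\Hom(\underline{x})$ by choosing isomorphisms $\beta_i \colon y_i \to x_i$ and forming $(\beta_i g_i \beta_{i-1}^{-1})_i$; the tuple $(\beta_0,\ldots,\beta_n)$ witnesses an isomorphism in $\iso(\cC^{[n]})$. Moreover, two elements of $\Hom(\underline{x})$ are isomorphic in $\iso(\cC^{[n]})$ if and only if they lie in the same $\bullet$-orbit, by the same rewriting as in the previous paragraph. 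Thus the isomorphism classes of objects in $\iso(\cC^{[n]})$ are indexed precisely by the orbits $\langle f_1,\ldots,f_n \rangle$ appearing in the statement.

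Combining these observations, $\iso(\cC^{[n]})$ is equivalent, as a groupoid, to the coproduct $\coprod_{\langle f_1,\ldots,f_n \rangle} Aut(\underline{x})_{(f_1,\ldots,f_n)}$ of one-object groupoids. Applying $\nerve$ and invoking Corollary \ref{EquivCats_nerve} together with Observation \ref{B_commutes_with_disjoint_union} yields the stated equivalence. The main obstacle is the bookkeeping in the second step: one must verify that the transport procedure is well-defined up to the $\bullet$-action on $\Hom(\underline{x})$, so that the indexing set of the coproduct is canonically the set of orbits described in the statement and is independent of the choice of transporting isomorphisms $\beta_i$.
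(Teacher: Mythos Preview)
Your proposal is correct and follows essentially the same route as the paper: both arguments reduce to showing that $\iso(\cC^{[n]})$ is equivalent as a groupoid to $\coprod_{\langle f_1,\ldots,f_n\rangle} Aut(\underline{x})_{(f_1,\ldots,f_n)}$, identify the automorphism group of a chain with the stabilizer under $\bullet$, and then invoke Proposition~\ref{Bergner 7.2}/Corollary~\ref{EquivCats_nerve} and Observation~\ref{B_commutes_with_disjoint_union}. Your write-up is in fact more explicit than the paper's about the transport step and the orbit/isomorphism-class correspondence, which the paper leaves implicit.
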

	
	\begin{proof}
		
		It suffices to argue that the categories \[ \iso(\cC^{[n]}) \hspace{.4 cm} \mbox{ and } \hspace{.4 cm}  \coprod_{\langle f_1,\ldots,f_n \rangle}  Aut(\underline{x})_{(f_1,\ldots,f_n)} \] are equivalent.	 Note that the objects in $\iso (\cC^{[n]})$ are given by chains of $n$ composable morphisms $f_1,f_2,\ldots, f_n$ between objects $x_0,x_1,\ldots,x_n$ in $\cC$:
		\[
		\begin{tikzcd}
		x_0 \ar[r, "f_1"] & x_1 \ar[r, "f_2"] & \cdots \ar[r, "f_n"] & x_n.
		\end{tikzcd}
		\]
		We denote this object in $\iso(\cC^{[n]})$ by the $n$-tuple $(f_1,f_2,\ldots, f_n)$. A morphism between objects $(f_1,f_2,\ldots, f_n)$ and $(g_1,g_2,\ldots, g_n)$ in $\iso(\cC^{[n]})$ is given by an $(n+1)$-tuple $(\alpha_0,\alpha_1,\ldots,\alpha_n)$ where each $\alpha_i$ is an isomorphism in $\cC$ making the diagram
		\[
		\begin{tikzcd}
		x_0 \ar[r, "f_1"] \ar[d, "\alpha_0", "\cong"'] & x_1 \ar[r, "f_2"] \ar[d, "\alpha_1", "\cong"'] & \cdots \ar[r, "f_n"] & x_n \ar[d, "\alpha_n", "\cong"']\\
		y_0 \ar[r, "g_1"] & y_1 \ar[r, "g_2"] & \cdots \ar[r, "g_n"] & y_n
		\end{tikzcd}
		\]
		commute in $\cC$. As we saw in Proposition \ref{Bergner 7.2}, it suffices to consider the automorphism classes of objects in $\iso(C^{[n]})$. So, in other words, for a given $(f_1,\ldots,f_n)$, we need to describe all possible morphisms $(\alpha_1,\ldots, \alpha_n)$ that fix $(f_1,\ldots,f_n)$. But this is exactly what the stabilizer $ Aut (\underline{x}) _{(f_1,\ldots,f_n)}$ does. Thus, $\iso(\cC^{[n]})$ is equivalent to \[ \coprod_{\langle f_1,\ldots,f_n \rangle}  Aut(\underline{x})_{(f_1,\ldots,f_n)}. \]
		
	\end{proof}
	
	\section{The classifying diagram for the category of vector spaces}\label{Section_FinVect}
	
	In this section we prove that the classifying diagram of finite vector spaces can be written in terms of classifying spaces of general linear groups; we use the group action defined in Proposition \ref{Prop_Group_action_bullet} as well as the other results from Section \ref{Section_stabilizer_characterization}. We then produce a more detailed description if we restrict the dimension of the matrices.
	
	
	The category of finite dimensional vector spaces over the field $\mathbb{F}$ has finite vector spaces as objects and linear maps as morphisms. Recall that every finite dimensional vector space is isomorphic to $\mathbb{F}^n$. Let $\FinVect(\mathbb{F})$ denote the subcategory of finite vector spaces where the objects are $\mathbb{F}^n$ and the morphisms are matrices with entries in $\mathbb{F}$. Note that $\FinVect(\mathbb{F})$ is equivalent to the category of finite dimensional vector spaces over $\mathbb{F}$. We let $\Mat_{n\times m}(\mathbb{F})$ be the set of $n\times m$ matrices. Also let $\GL_n{(\mathbb{F})}$ denote the general linear group of dimension $n$.
	The following proposition is a well-known result in linear algebra and is a specific case of the action $\bullet$ from Proposition \ref{Prop_Group_action_bullet}, but it is used to prove Corollary \ref{FinVect_stabilizer_perspective}.

	\begin{prop}
		The map
		\[ 
		\begin{tikzcd}
		\bullet : \left(\GL_n (\mathbb{F}) \times \GL_m (\mathbb{F})\right)\times \Mat_{n\times m}(\mathbb{F}) \ar[r]  & \Mat_{n\times m}(\mathbb{F}) \\
		\left( (f,g), A\right) \ar[r, mapsto] & gAf^{-1}
		\end{tikzcd}
		\]
		defines a group action.
	\end{prop}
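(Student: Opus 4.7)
The plan is to recognize this proposition as the specialization of Proposition \ref{Prop_Group_action_bullet} to the category $\FinVect(\mathbb{F})$ with the two-term tuple $\underline{x} = (\mathbb{F}^n, \mathbb{F}^m)$. Under the standard identifications $Aut(\mathbb{F}^n) \cong \GL_n(\mathbb{F})$, $Aut(\mathbb{F}^m) \cong \GL_m(\mathbb{F})$, and $\Hom_{\FinVect(\mathbb{F})}(\mathbb{F}^n, \mathbb{F}^m) \cong \Mat_{n\times m}(\mathbb{F})$, the map $((\alpha_0,\alpha_1), f_1) \mapsto \alpha_1 f_1 \alpha_0^{-1}$ from Proposition \ref{Prop_Group_action_bullet} becomes exactly the map $((f,g),A) \mapsto gAf^{-1}$ stated here, so in principle one could simply cite the earlier proposition and be done.

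Since the statement is used as an independent stepping stone toward Corollary \ref{FinVect_stabilizer_perspective}, however, and since the matrix-level verification is very short, I would give a direct proof of the two group-action axioms. First I would check that the identity $(I_n, I_m) \in \GL_n(\mathbb{F}) \times \GL_m(\mathbb{F})$ acts trivially: $(I_n, I_m) \bullet A = I_m \cdot A \cdot I_n^{-1} = A$. Second I would check compatibility with the coordinatewise group law on $\GL_n(\mathbb{F}) \times \GL_m(\mathbb{F})$: given $(f_1,g_1), (f_2,g_2) \in \GL_n(\mathbb{F}) \times \GL_m(\mathbb{F})$ and $A \in \Mat_{n\times m}(\mathbb{F})$, the calculation
\[
\bigl((f_1,g_1)(f_2,g_2)\bigr) \bullet A = (f_1 f_2,\, g_1 g_2) \bullet A = (g_1 g_2)\, A\, (f_1 f_2)^{-1} = g_1\bigl(g_2 A f_2^{-1}\bigr) f_1^{-1} = (f_1,g_1) \bullet \bigl((f_2,g_2) \bullet A\bigr)
\]
uses only the identity $(f_1 f_2)^{-1} = f_2^{-1} f_1^{-1}$ and associativity of matrix multiplication.

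There is essentially no obstacle to this proof; the whole argument is a two-line matrix computation. The only subtlety worth flagging is the order-reversal when inverting the product $f_1 f_2$, which is precisely what is needed so that the two possible parenthesizations in the compatibility axiom agree. If one prefers the slicker route, the proof can be written as a single sentence invoking Proposition \ref{Prop_Group_action_bullet} applied to the pair of objects $(\mathbb{F}^n, \mathbb{F}^m)$ in $\FinVect(\mathbb{F})$.
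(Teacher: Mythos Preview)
Your proposal is correct and matches the paper's proof essentially line for line: the paper also verifies the identity axiom via $(I_n,I_m)\bullet A = I_m A I_n^{-1} = A$ and the compatibility axiom by exactly the chain of equalities you wrote, using $(f_1 f_2)^{-1} = f_2^{-1} f_1^{-1}$. Your additional remark that the result is the specialization of Proposition~\ref{Prop_Group_action_bullet} to $\underline{x}=(\mathbb{F}^n,\mathbb{F}^m)$ is also noted in the paper just before the statement, so there is no divergence in approach.
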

	
	\begin{proof}
		For simplicity, we refrain from referencing the underlying field $\mathbb{F}$ in this proof. Specifically, we write $\GL_n\defeq\GL_n(\mathbb{F})$ and $\Mat_{n\times m}\defeq \Mat_{n\times m}(\mathbb{F})$.  We need to verify two things to verify that $\bullet$ is a group action.  First we need to verify that the action of the identity element in $\GL_n\times \GL_m$ preserves any element of the set $\Mat_{n\times m}$.  Let $A\in\Mat_{n\times m}$ and let $I_n$ denote the identity $n\times n$ matrix.  So the identity element in the group $\GL_n\times \GL_m$ is given by $(I_n,I_m)$. Thus
		\begin{align*}
		(I_n, I_m)\bullet A &=I_mAI_n^{-1}\\
		&=I_mAI_n\\
		&=A.
		\end{align*}
		Next, the compatibility of the action must be verified. Let $(f_1,g_1)$ and $(f_2,g_2)$ be elements in $\GL_n\times \GL_m$. Therefore
		\begin{align*}
		\left( (f_1,g_1)(f_2,g_2)\right)\bullet A &= (f_1f_2,g_1g_2)\bullet A\\
		&=(g_1g_2)A(f_1f_2)^{-1}\\
		&=g_1g_2Af_2^{-1}f_1^{-1}\\
		&=g_1\left( (f_2,g_2)\bullet A\right) f_1^{-1}\\
		&=(f_1,g_1)\bullet \left( (f_2,g_2)\bullet A\right)
		\end{align*}
		and hence $\bullet$ defines a group action.
	\end{proof}

	Let $A,B\in \Mat_{n\times m}(\mathbb{F})$. The matrices $A$ and $B$ are in the same equivalence class under the group action $\bullet$ if and only if there exists some $(g,f)\in\GL_n(\mathbb{F})\times \GL_m(\mathbb{F})$ such that $(g,f)\bullet A= B$. We let $\langle A \rangle$ denote the equivalence class of $A$ under this equivalence relation. Note that the stabilizer of $A$, $(\GL_n(\mathbb{F})\times \GL_m(\mathbb{F}))_A$, is a subgroup of $\GL_n(\mathbb{F})\times \GL_m(\mathbb{F})$.  
	
	Now we have enough background to state the result for the classifying diagram of the category of finite vector spaces over a field.
	
	\begin{cor} \label{FinVect_stabilizer_perspective}
		The $0$th level of the classifying diagram of $\FinVect( \mathbb{F})$ is given by
		\[N(\FinVect (\mathbb{F}))_0\simeq \coprod_{n\in \mathbb{N}} B(\GL_n(\mathbb{F}))\]
		and the $1$st level is given by 
		\[N(\FinVect (\mathbb{F}))_1\simeq \coprod_{n,m\in \mathbb{N}} \left[ \coprod_{\langle A \rangle\in \Mat_{n\times m}} B\left( (\GL_n\times \GL_m)_A \right) \right] \]
		where $\langle A \rangle$ is the equivalence class  of $A$ under the relation defined by the group action $\bullet$ of $\GL_n\times \GL_m$ acting on $\Mat_{n\times m}(\mathbb{F})$, and $\left( \GL_n\times \GL_m\right)_A$ is the stabilizer of $A$.
	\end{cor}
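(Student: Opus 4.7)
The plan is to observe that this corollary is essentially a direct specialization of the general stabilizer characterization proved in the previous section to the category $\FinVect(\mathbb{F})$, where the isomorphism classes of objects and the relevant $\Aut$-action admit a completely explicit description in terms of standard linear algebra.

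For the $0$th level, I would apply Proposition \ref{Bergner 7.2} directly. Since every finite dimensional vector space over $\mathbb{F}$ is isomorphic to $\mathbb{F}^n$ for a unique $n \in \mathbb{N}$, the isomorphism classes $\langle x \rangle$ of objects of $\FinVect(\mathbb{F})$ are canonically indexed by $\mathbb{N}$, and $\Aut(\mathbb{F}^n) = \GL_n(\mathbb{F})$ by definition of the general linear group. Substituting these identifications into Proposition \ref{Bergner 7.2} yields the claimed equivalence $N(\FinVect(\mathbb{F}))_0 \simeq \coprod_{n \in \mathbb{N}} B(\GL_n(\mathbb{F}))$.

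For the $1$st level, the plan is to apply Theorem \ref{Stabilizer_perspective} with $n=1$. The isomorphism classes of pairs $\underline{x} = (x_0, x_1)$ of objects in $\FinVect(\mathbb{F})$ are indexed by $(n,m) \in \mathbb{N}^2$ with representative $(\mathbb{F}^n, \mathbb{F}^m)$, so the coproduct over equivalence classes $\langle f_1 \rangle$ from Theorem \ref{Stabilizer_perspective} splits as a double coproduct first over $(n,m) \in \mathbb{N}^2$ and then over orbits $\langle A \rangle$ of morphisms $\mathbb{F}^n \to \mathbb{F}^m$ under the action of $\Aut(\underline{x}) = \GL_n(\mathbb{F}) \times \GL_m(\mathbb{F})$. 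I would then verify that the abstract action of Proposition \ref{Prop_Group_action_bullet} restricted to $\underline{x} = (\mathbb{F}^n, \mathbb{F}^m)$ coincides with the concrete action $\bullet$ defined just above the corollary: namely $(f,g) \bullet A = g A f^{-1}$, which is precisely $\alpha_1 f_1 \alpha_0^{-1}$ after relabeling. Consequently $\Hom(\underline{x}) = \Mat_{n \times m}(\mathbb{F})$ (or its opposite-dimension variant, depending on the matrix-versus-linear-map convention used in the paper), and the stabilizer $\Aut(\underline{x})_{(f_1)}$ is exactly $(\GL_n \times \GL_m)_A$. Plugging these into Theorem \ref{Stabilizer_perspective} gives the claimed decomposition.

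The proof is thus essentially a matter of unpacking definitions; the only real content to check is the equality of the two $\bullet$-actions and the matching of parameter sets. The step most likely to trip up a reader is keeping the bookkeeping straight between isomorphism classes of \emph{object tuples} $\underline{x}$ (which give the outer coproduct over $(n,m)$) and the $\bullet$-orbits of morphism tuples within a fixed $\underline{x}$ (which give the inner coproduct over $\langle A \rangle$); I would make this split explicit before invoking Theorem \ref{Stabilizer_perspective} so that the two levels of the coproduct in the conclusion are unambiguous.
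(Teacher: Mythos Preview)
Your proposal is correct and follows essentially the same approach as the paper: apply Proposition \ref{Bergner 7.2} for the $0$th level using $\Aut(\mathbb{F}^n)=\GL_n(\mathbb{F})$, and apply Theorem \ref{Stabilizer_perspective} for the $1$st level after identifying the abstract action with the concrete matrix action $\bullet$. The paper's own proof is terser but identical in substance, so your more explicit bookkeeping of the double coproduct is a reasonable expansion rather than a different argument.
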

	
	\begin{proof}
		The characterization of the 0th level follows from Proposition \ref{Bergner 7.2} and the fact that the isomorphisms in $\FinVect (\mathbb{F})$ are given by $\GL_n(\mathbb{F})$ for all non negative integers $n$. Since morphisms from $\mathbb{F}^n$ to $\mathbb{F}^m$ are given by $A\in\Mat_{n\times m}$ and isomorphisms between two $n\times m$ matrices are given by pairs $(f,g)\in \GL_n\times \GL_m$, 
		the desired decomposition of the 1st level of $N(\FinVect(\mathbb{F}))$ follows from Theorem \ref{Stabilizer_perspective}.
	\end{proof}

	We now turn our attention to finding the classifying diagram of a subcategory of $\FinVect(\mathbb{F})$.
\begin{ex}
Let $\FinVect_{\leq 2}(\mathbb{F})$ be the subcategory of $\FinVect(\mathbb{F})$ where the objects are $\mathbb{F}$ and $\mathbb{F}^2$, and the morphisms are given by linear maps. For the $0$th level of the classifying diagram, we get $B(\GL_1(\mathbb{F}))\coprod B(\GL_2(\mathbb{F}))$. Observe that $B(\GL_1(\mathbb{F}))=B(\mathbb{F}^\times )$.

		The $1$st level of the classifying diagram is more interesting.  We break the functor category  $\iso\left( \FinVect_{\leq 2}(\mathbb{F})^{[1]} \right)$ into four types of objects:
		\begin{enumerate}
			\item $\mathbb{F}\to \mathbb{F}$,
			\item $\mathbb{F}\to \mathbb{F}^2$,
			\item $\mathbb{F}^2 \to \mathbb{F}$, and
			\item $\mathbb{F}^2 \to \mathbb{F}^2$.
		\end{enumerate}
		If two objects $\mathbb{F}^i\to \mathbb{F}^j$ and $\mathbb{F}^n\to \mathbb{F}^m$ are isomorphic, then $i=n$ and $j=m$. Note that an isomorphism between two objects $A,B:\mathbb{F}^n\to \mathbb{F}^m$ is given by a pair of matrices $(C,D)$ in $\GL_n(\mathbb{F})\times \GL_m (\mathbb{F})$ such that the diagram
		\[ 
		\begin{tikzcd}
		\mathbb{F}^n \ar[r, "C", "\cong"'] \ar[d, "A"] & \mathbb{F}^n \ar[d, "B"]\\
		\mathbb{F}^m \ar[r, "D", "\cong"'] & \mathbb{F}^m
		\end{tikzcd}
		\]
		commutes. Observe that the objects $A$ and $B$ in the category $\iso\left( \FinVect_{\leq 2}(\mathbb{F})^{[1]} \right)$ are $m\times n$ matrices with entries in $\mathbb{F}$.
		
		\emph{Type (i).}  There are only two $1\times 1$ equivalence classes of matrices with entries in $\mathbb{F}$ under the action $\bullet$, namely, $\langle [0] \rangle=\{A\in \Mat_{1\times 1}(\mathbb{F}): rank (A)=0 \}$ and $\langle [1] \rangle = \{A\in \Mat_{1\times 1}(\mathbb{F}): rank (A)=1 \}$. By Corollary \ref{FinVect_stabilizer_perspective}, it suffices to consider the stabilizer of a representative from each equivalence class. Note that $(C,D)\in \GL_1\times \GL_1 $ is an automorphism for $[0]$ for any pair $(C,D)$ in $\GL_1\times \GL_1$. Also observe that the automorphisms for $[1]$ are of the form $(C,C^{-1})$ in $\GL_1\times \GL_1$.   So type (i) objects contribute $B(\GL_1\times \GL_1)\coprod B(\GL_1)\simeq B(\mathbb{F}^\times \times \mathbb{F}^\times)\coprod B(\mathbb{F}^\times)$ to the first level of the classifying diagram.

		\emph{Type (ii).} 	Under the action $\bullet$, we have the two equivalence classes
		\[\left\langle \left[ \begin{array}{cc}
		0  \\
		0 
		\end{array} \right] \right\rangle
		=
		\left\{ \left[ \begin{array}{cc}
		0  \\
		0 
		\end{array} \right] \right\}
		= \{A\in \Mat_{2\times 1}(\mathbb{F}): rank (A)=0 \}
		\]
		and 
		\[\left\langle \left[ \begin{array}{cc}
		1  \\
		0 
		\end{array} \right] \right\rangle
		= \{A\in \Mat_{2\times 1}(\mathbb{F}): rank (A)=1 \}.
		\]
		To verify, for example, that 	\[
		\left[ \begin{array}{cc}
		0 \\
		1 
		\end{array} \right] \in
		\left\langle \left[ \begin{array}{cc}
		1  \\
		0 
		\end{array} \right] \right\rangle,
		\]
		observe that
		\begin{equation}\label{Group_Action_Eqiv_Class_Example}
		\left( \left[ \begin{array}{cc}
		1
		\end{array} \right],
		\left[ \begin{array}{cc}
		0 & 1 \\
		1 & 0 
		\end{array} \right]\right) \bullet
		\left[ \begin{array}{cc}
		1 \\
		0 
		\end{array} \right] =
		\left[ \begin{array}{cc}
		0 & 1 \\
		1 & 0 
		\end{array} \right]
		\left[ \begin{array}{cc}
		1 \\
		0 
		\end{array} \right]
		\left[ \begin{array}{cc}
		1
		\end{array} \right]^{-1}=
		\left[ \begin{array}{cc}
		0 \\
		1 
		\end{array} \right].
		\end{equation}
		By Corollary \ref{FinVect_stabilizer_perspective}, it suffices to consider the stabilizer of a representative from each equivalence class. Using matrix multiplication, one can check that
		\[ (\GL_1\times \GL_2)_{{\footnotesize
				\left[ \begin{array}{cc}
				0  \\
				0 
				\end{array} \right] }
		}
		= \GL_1\times \GL_2 \cong \mathbb{F}^\times \times \GL_2,
		\]
		and so it remains to find the stabilizer for ${\footnotesize
				\left[ \begin{array}{cc}
				1  \\
				0 
				\end{array} \right] }$
		 in $\GL_1\times \GL_2$.
		
\begin{prop}
The stabilizer for 	${\footnotesize
				\left[ \begin{array}{cc}
				1  \\
				0 
				\end{array} \right] }$
	over $\mathbb{F}$ is given by
		\[ \left(\GL_1  \times \GL_2 \right)_{{\footnotesize
				\left[ \begin{array}{cc}
				1  \\
				0 
				\end{array} \right] }
		}
		= \left\{ 
		\left( [a],
		\left[ \begin{array}{cc}
		a & x  \\
		0 & z
		\end{array} \right] \right):
		a,z\in \mathbb{F}^\times, x\in\mathbb{F}
		\right\}
		\cong \mathbb{F}^\times \times \mathbb{F}^\times \times \mathbb{F},
		\]
		where $\mathbb{F}^\times$ are the units in $\mathbb{F}$.
\end{prop}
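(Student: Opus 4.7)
The plan is to solve the stabilizer equation $(f,g) \bullet \begin{pmatrix} 1 \\ 0 \end{pmatrix} = \begin{pmatrix} 1 \\ 0 \end{pmatrix}$ by direct matrix computation. I would parametrize an arbitrary element of $\GL_1(\mathbb{F}) \times \GL_2(\mathbb{F})$ as $\left([a], \begin{pmatrix} w & x \\ y & z \end{pmatrix}\right)$ with $a \in \mathbb{F}^\times$ and $wz - xy \neq 0$, and then expand the action to get
\[
g \begin{pmatrix} 1 \\ 0 \end{pmatrix} [a]^{-1} \;=\; \begin{pmatrix} w a^{-1} \\ y a^{-1} \end{pmatrix}.
\]
Setting this equal to $\begin{pmatrix} 1 \\ 0 \end{pmatrix}$ and comparing coordinates immediately forces $w = a$ and $y = 0$. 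After substituting these values, the invertibility condition $\det g \neq 0$ collapses to $az \neq 0$, so $z \in \mathbb{F}^\times$, while $x \in \mathbb{F}$ remains an unconstrained free parameter. Collecting the possibilities yields precisely the set of pairs listed in the proposition, proving the first equality.

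For the identification with $\mathbb{F}^\times \times \mathbb{F}^\times \times \mathbb{F}$, I would exhibit the explicit parametrization $\left([a], \begin{pmatrix} a & x \\ 0 & z \end{pmatrix}\right) \longleftrightarrow (a, z, x)$, which is manifestly a bijection of underlying sets onto the product. I do not anticipate any serious obstacle: the proof amounts to a short matrix calculation. The only place requiring mild care is keeping track of which side $[a]^{-1}$ acts on in the definition of $\bullet$ (so that the equation $w = a$ rather than, say, $w = a^{-1}$ comes out correctly), and then observing that once $y = 0$ is imposed the determinant of $g$ reduces cleanly to the product $az$, so the invertibility condition factors into the two independent constraints $a \in \mathbb{F}^\times$ and $z \in \mathbb{F}^\times$.
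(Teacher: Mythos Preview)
Your proposal is correct and follows essentially the same route as the paper: both proofs parametrize a general element of $\GL_1\times\GL_2$, write out the stabilizer condition as a matrix equation (the paper rearranges it as $D\left[\begin{smallmatrix}1\\0\end{smallmatrix}\right]=\left[\begin{smallmatrix}1\\0\end{smallmatrix}\right]C$ while you keep the $[a]^{-1}$ on the right, but the resulting constraints $w=a$, $y=0$ are identical), and then read off the invertibility condition $z\in\mathbb{F}^\times$ from the determinant. Your remark that the final identification is a bijection of underlying sets is a prudent touch, since the group law on the $x$-coordinate is twisted by $a$ and $z$.
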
		

	\begin{proof}
	A pair $(C,D)$ is in $\left(\GL_1  \times \GL_2 \right)_{{\footnotesize
				\left[ \begin{array}{cc}
				1  \\
				0 
				\end{array} \right] }
		}$ 
		if and only if $D \left[ \begin{array}{c}
				1  \\
				0 
				\end{array} \right]
				=
				\left[ \begin{array}{c}
				1  \\
				0 
				\end{array} \right]
				C$.  
	Let $C=\left[ \begin{array}{c}
				a   \\
				\end{array} \right]$ and
				$D=\left[ \begin{array}{cc}
				w & x  \\
				y & z
				\end{array} \right]$. 
	Then $(C,D)$ is in $\left(\GL_1  \times \GL_2 \right)_{{\footnotesize
				\left[ \begin{array}{cc}
				1  \\
				0 
				\end{array} \right] }
		}$  if and only if
	$\left[ \begin{array}{c}
				w   \\
				y 
				\end{array} \right]=
				\left[ \begin{array}{c}
				a   \\
				0 
				\end{array} \right]$, 
	which guarantees that $a=w$ and $y=0$. Also observe that the $a,z\in \mathbb{F}^\times$ to guarantee that the matrices $C$ and $D$ are invertible.	
	\end{proof}

		Thus type (ii) objects contribute $B(\mathbb{F}^\times \times \GL_2)\coprod B(\mathbb{F}^\times \times \mathbb{F}^\times \times \mathbb{F})$ to the first level of the classifying diagram.

		\emph{Type (iii).}  Using properties of the transpose of matrices, we get that $(C,D)\in \left( \GL_1\times \GL_2 \right)_A$ if and only if $(D^T,C^T)\in \left( \GL_2\times \GL_1\right)_{A^T}$. Thus, just like type (ii), type (iii) also contributes $B(\mathbb{F}^\times \times \GL_2)\coprod B(\mathbb{F}^\times \times \mathbb{F}^\times \times \mathbb{F})$ to the first level of the classifying diagram.
		
		\emph{Type (iv).}  Under the group action $\bullet$, there are three equivalence classes:
		
		\[\left\langle \left[ \begin{array}{cc}
		0 & 0 \\
		0 & 0
		\end{array} \right] \right\rangle
		=
		\left\{ \left[ \begin{array}{cc}
		0 & 0  \\
		0 & 0
		\end{array} \right] \right\}
		= \{A\in \Mat_{2\times 2}(\mathbb{F}): rank (A)=0 \},
		\]
		
		\[\left\langle \left[ \begin{array}{cc}
		1 & 0 \\
		0 & 0
		\end{array} \right] \right\rangle
		= \{A\in \Mat_{2\times 2}(\mathbb{F}): rank (A)=1 \}, \]
		and 
		\[\left\langle \left[ \begin{array}{cc}
		1 & 0 \\
		0 & 1
		\end{array} \right] \right\rangle
		= \GL_2(\mathbb{F})
		= \{A\in \Mat_{2\times 2}(\mathbb{F}): rank (A)=2 \}.
		\]
		Note that to verify the above equivalence classes, we use a similar process as (\ref{Group_Action_Eqiv_Class_Example}).  Using matrix multiplication, one can check that the stabilizers of representatives from each equivalence class are given by
		
		\[ (\GL_2\times \GL_2)_{{\footnotesize
				\left[ \begin{array}{cc}
				0 & 0  \\
				0 & 0
				\end{array} \right] }
		}
		= \GL_2 \times \GL_2,
		\]
		and
		\[ (\GL_2\times \GL_2)_{{\footnotesize
				\left[ \begin{array}{cc}
				1 & 0  \\
				0 & 1
				\end{array} \right] }
		}
		= \left\{ 
		\left(C,D\right) \in \GL_2\times \GL_2: D=C^{-1}
		\right\}
		\cong \GL_2,
		\]
		and so it remains to find the stabilizer for $\left[ \begin{array}{cc}
				1 & 0  \\
				0 & 0
				\end{array} \right]$. 

\begin{prop}
The stabilizer for 	${\footnotesize
				\left[ \begin{array}{cc}
				1 & 0  \\
				0 & 0
				\end{array} \right] }$
	over the $\mathbb{F}$ is given by
		\[ \left(\GL_2  \times \GL_2  \right)_{{\footnotesize
				\left[ \begin{array}{cc}
				1 & 0  \\
				0 & 0
				\end{array} \right] }
		}
		= \left\{ 
		\left( 
		\left[ \begin{array}{cc}
		a & b  \\
		0 & d
		\end{array} \right],
		\left[ \begin{array}{cc}
		a & 0  \\
		y & z
		\end{array} \right] \right):
		a,b\in \mathbb{F}^\times, c\in\mathbb{F}
		\right\}
		\]
		\[
		\cong \mathbb{F}^\times \times \mathbb{F}^\times \times \mathbb{F}^\times \times \mathbb{F}\times \mathbb{F},		
		\]
		where $\mathbb{F}^\times$ are the units in $\mathbb{F}$.
\end{prop}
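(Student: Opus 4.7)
The plan is to mimic the calculation used for the type (ii) stabilizer in the previous proposition, now with $A = \begin{bmatrix} 1 & 0 \\ 0 & 0 \end{bmatrix}$ in place of $\begin{bmatrix} 1 \\ 0 \end{bmatrix}$. By definition of the action $\bullet$, a pair $(C, D) \in \GL_2 \times \GL_2$ stabilizes $A$ precisely when $(C, D) \bullet A = A$, which, after clearing the invertible matrix, is equivalent to a matrix identity of the shape $MA = AN$ with $\{M, N\} = \{C, D\}$ in the order dictated by the source/target convention already in force for $\bullet$.

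First I would write $C$ and $D$ as general $2 \times 2$ matrices with unknown entries $c_{ij}$ and $d_{ij}$, and compute both sides of this identity directly. Because $A$ is the rank-one projection onto the first coordinate, multiplication by $A$ on the left selects the first row of the other matrix (zeroing the second), and multiplication by $A$ on the right selects the first column (zeroing the second). Equating the four resulting entries yields exactly three linear constraints: the $(1,1)$ entries of $C$ and $D$ must coincide, and one specific off-diagonal entry of each matrix must vanish. The remaining two off-diagonal entries and the two remaining diagonal entries are left free.

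Next I would impose the invertibility of $C$ and $D$. After the vanishing constraints each is triangular, so invertibility is equivalent to both of its diagonal entries being nonzero. Combined with the identification of the shared $(1,1)$ entry, this leaves three independent units in $\mathbb{F}^\times$ (the common top-left entry, together with the two surviving diagonal entries, one from each matrix) and two arbitrary elements of $\mathbb{F}$ (the surviving off-diagonal entries). Reading off these five parameters yields the claimed set bijection with $\mathbb{F}^\times \times \mathbb{F}^\times \times \mathbb{F}^\times \times \mathbb{F} \times \mathbb{F}$.

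There is no genuine obstacle in this argument; it is routine $2 \times 2$ linear algebra, structurally identical to the type (ii) calculation. The only care required is to keep the source/target convention for $(C, D)$ consistent with what was used earlier, since it is precisely this convention that determines which of $C, D$ comes out upper triangular and which lower triangular in the final expression.
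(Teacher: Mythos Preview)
Your proposal is correct and matches the paper's proof essentially line for line: the paper writes $C=\begin{bmatrix} a & b \\ c & d \end{bmatrix}$, $D=\begin{bmatrix} w & x \\ y & z \end{bmatrix}$, equates the two products with $A=\begin{bmatrix} 1 & 0 \\ 0 & 0\end{bmatrix}$ to obtain $a=w$, $c=0$, $x=0$, and then invokes invertibility of the resulting triangular matrices to force $a,d,z\in\mathbb{F}^\times$. Your caution about the source/target convention is well placed, since it is exactly what pins down that $C$ ends up upper triangular and $D$ lower triangular rather than the reverse.
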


\begin{proof}
	A pair of matrices  $(C,D)$ is in $\left(\GL_2  \times \GL_2  \right)_{{\footnotesize
				\left[ \begin{array}{cc}
				1 & 0  \\
				0 & 0
				\end{array} \right] }
		}$ if and only if $D \left[ \begin{array}{cc}
				1 & 0  \\
				0 & 0
				\end{array} \right]
				=
				\left[ \begin{array}{cc}
				1 & 0  \\
				0 & 0
				\end{array} \right]
				C$.  
	Let $C=\left[ \begin{array}{cc}
				a & b  \\
				c & d
				\end{array} \right]$ and
				$D=\left[ \begin{array}{cc}
				w & x  \\
				y & z
				\end{array} \right]$. 
	Then $(C,D)$ is in $\left(\GL_2  \times \GL_2  \right)_{{\footnotesize
				\left[ \begin{array}{cc}
				1 & 0  \\
				0 & 0
				\end{array} \right] }
		}$ if and only if
	$\left[ \begin{array}{cc}
				a & 0  \\
				c & 0
				\end{array} \right]=
				\left[ \begin{array}{cc}
				w & x  \\
				0 & 0
				\end{array} \right]$, 
	which guarantees that $a=w$, $c=0$, and $x=0$. Also observe that $a,d,z\in \mathbb{F}^\times$ to guarantee that the matrices $C$ and $D$ are invertible.
\end{proof}

Thus type (iv) contributes $B(\GL_2\times \GL_2) \coprod B(\mathbb{F}^\times \times \mathbb{F}^\times \times \mathbb{F}^\times \times \mathbb{F} \times \mathbb{F}) \coprod B(\GL_2)$ to the first level of the classifying diagram.

		Putting together what we obtain from all four types of objects in $\iso\left( \FinVect_{\leq 2}(\mathbb{F})^{[1]} \right)$, we can describe the first level of the classifying diagram as 
		\begin{align*}
		N(\FinVect_{\leq 2}(\mathbb{F}) )_1 \simeq B(& \mathbb{F}^\times \times  \mathbb{F}^\times) \coprod B(\mathbb{F}^\times)\\
		&    \coprod B(\mathbb{F}^\times \times \GL_2)\coprod B(\mathbb{F}^\times \times \mathbb{F}^\times \times \mathbb{F}) \\
		&    \coprod B(\mathbb{F}^\times \times \GL_2)\coprod B(\mathbb{F}^\times \times \mathbb{F}^\times \times \mathbb{F}) \\
		& \coprod B(\GL_2\times \GL_2) \coprod B(\mathbb{F}^\times \times \mathbb{F}^\times \times \mathbb{F}^\times \times \mathbb{F} \times \mathbb{F}) \coprod B(\GL_2).
		\end{align*}

\end{ex}

	\section{The classifying diagram for the category of finite sets}\label{Section_FinSet}
	
	In this section we prove that the classifying diagram for the category of finite sets, denoted by $\FinSet$, can be decomposed into the classifying spaces of products of wreath products. 
	As a consequence, we also prove decompositions of the subcategories $\FinSet_{inj}$ and $\FinSet_{surj}$, the subcategories consisting of injective and surjective functions, respectively. In order to work with the $1$st level of the classifying diagram, as we have seen previously, we need to understand automorphisms of morphisms; each function between finite sets can be depicted by a tree. The section begins with the definition of a wreath product and recalling the relationship between wreath products and automorphisms of trees.
	
	\subsection{Wreath products and trees}
	
	Let us recall the definition of the wreath product.  Let $K$ and $L$ be two groups and $\rho:K\to \Sigma_n$ be a homomorphism where $\Sigma_n$ is the $n$th symmetric group. Let $H\defeq L^n$; 
	an injective homomorphism $\phi: \Sigma_n\to Aut(H)$ can be constructed by letting the elements of $\Sigma_n$ permute the $n$ factors of $H$. The \emph{wreath product} of $L$ by $K$, denoted by $L\wr K$, is the semidirect product $H\rtimes K$ with respect to the homomorphism $\phi\circ \rho:K\to Aut(H)$ \cite[\S 5.5, Ex. 23]{DummitFoote}. 
	Wreath products are nice tools for describing the group of automorphisms of specific types of rooted trees. We recall definitions relevant to trees.
	
	\begin{defn}
		\begin{enumerate}
			\item A \emph{rooted tree} is a connected simple graph without cycles and with a distinguished vertex called the \emph{root}.
			\item  A vertex $u$ is \emph{adjacent} to a vertex $v$ in a tree if there is an edge between $u$ and $v$. 
			\item The \emph{level} of a vertex $v$ in a rooted tree is the length of the unique path from the root to this vertex.
			\item If $u$ is a vertex at level $j$ that is adjacent to a vertex $v$ at level $j + 1$, then $v$ is said to be a \emph{child} of $u$ and $u$ is the \emph{parent} of $v$. 
			\item The \emph{height} of a rooted tree is the length of the longest path from the root to any vertex.
			\item Let $V$ be the set of vertices for a rooted tree $\Gamma$. An \emph{automorphism} of $\Gamma$ is a bijection $\phi:V\to V$ such that $u$ and $v$ are adjacent if and only if $\phi(u)$ and $\phi(v)$ are adjacent. 
		\end{enumerate}
		\end{defn}
	
	We only consider trees of height 2. To see how wreath products are used to describe automorphisms on rooted trees of height 2, consider the tree 
	\begin{center}
	\begin{tikzpicture}
	\filldraw 
	(1,2.5) circle (2pt) node[above] {$a_1$} --
	(2,1) circle (2pt) node[below] {$e_1$}     -- 
	(6.5,-.25) circle (2pt) node[below] {$r$}
	(2,2.5) circle (2pt) node[above] {$a_2$} --	(2,1)
	(3,2.5) circle (2pt) node[above] {$a_3$} --	(2,1)
	(4,2.5) circle (2pt) node[above] {$b_1$} --
	(5,1) circle (2pt) node[below] {$e_2$}     -- (6.5,-.25)
	(5,2.5) circle (2pt) node[above] {$b_2$} --	(5,1)
	(6,2.5) circle (2pt) node[above] {$b_3$} --	(5,1)
	(7,2.5) circle (2pt) node[above] {$c_1$} --
	(8,1) circle (2pt) node[below] {$e_3$}     -- (6.5,-.25)
	(8,2.5) circle (2pt) node[above] {$c_2$} --	(8,1)
	(9,2.5) circle (2pt) node[above] {$c_3$} --	(8,1)
	(10,2.5) circle (2pt) node[above] {$d_1$} --
	(11,1) circle (2pt) node[below] {$e_4$}     -- (6.5,-.25)
	(11,2.5) circle (2pt) node[above] {$d_2$} --	(11,1)
	(12,2.5) circle (2pt) node[above] {$d_3$} --	(11,1)
	 ;
	\end{tikzpicture}
	\end{center}
%
%
	with root $r$; each level 1 vertex is the parent of 3 children.  An automorphism on this tree has two different group actions on the vertices.  First, we have a $\Sigma_4$ action occurring on the vertices $\{e_1,e_2,e_3,e_4\}$.  We also have four different $\Sigma_3$ actions occurring; $\Sigma_3$ acts on the children of each $e_i$.  For example, we have a $\Sigma_3$ action on $\{a_1,a_2,a_3\}$.  This is a good example for what a wreath product captures.  The automorphisms on the above tree are described be the wreath product of $\Sigma_3$ by $\Sigma_4$, or using the wreath notation, it is $\Sigma_3\wr\Sigma_4$. We denote the above tree as $\Gamma_{3,4}$.
	
	The more general result also holds.  Let $\Gamma_{n,m}$ denote the rooted tree where the root has children $e_1,\ldots,e_m$ and each $e_i$ has $n$ children.  The group of automorphisms $Aut(\Gamma_{n,m})$ is isomorphic to $\Sigma_n\wr \Sigma_m$.
	
	\subsection{The classifying diagram}
	
	So what is the purpose of talking about automorphism on trees like $\Gamma_{n,m}$? We can use these trees to describe a morphism from a set of order $n$ to a set of order $m$; in the first level of the classifying diagram, we need to describe the group of automorphisms for a morphism.  The above pictured tree, $\Gamma_{3,4}$, is an example of how a set of order $12$ can map to a set of order $4$. But there are many other ways for a set of order $12$ to map to a set of order $4$. For example, the tree
		\begin{center}
		\begin{tikzpicture}
		\filldraw 
		(1.5,2.5) circle (2pt) node[above] {} --
		(2,1) circle (2pt) node[below] {}     -- 
		(6.5,-.25) circle (2pt) node[below] {}
		(2.5,2.5) circle (2pt) node[above] {} --	(2,1)
		(4.5,2.5) circle (2pt) node[above] {} --
		(5,1) circle (2pt) node[below] {}     -- (6.5,-.25)
		(5.5,2.5) circle (2pt) node[above] {} --	(5,1)
		(7,2.5) circle (2pt) node[above] {} --
		(8,1) circle (2pt) node[below] {}     -- (6.5,-.25)
		(7.66,2.5) circle (2pt) node[above] {} --	(8,1)
		(8.33,2.5) circle (2pt) node[above] {} --	(8,1)
		(9,2.5) circle (2pt) node[above] {} --	(8,1)
		(10,2.5) circle (2pt) node[above] {} --
		(11,1) circle (2pt) node[below] {}     -- (6.5,-.25)
		(10.66,2.5) circle (2pt) node[above] {} --	(11,1)
		(11.33,2.5) circle (2pt) node[above] {} --	(11,1)
		(12,2.5) circle (2pt) node[above] {} --	(11,1)
		;
		\end{tikzpicture}
	\end{center}
	is another way for a set of order $12$ to map to a set of $4$; the group of automorphisms is now given by a product of wreath products: $\left(\Sigma_2\wr \Sigma_2\right) \times \left(\Sigma_4\wr \Sigma_2\right)$. We denote this latter tree as $\Gamma_{2,2}\cup\Gamma_{4,2}$. 
	
	In general, given two rooted trees $\Gamma_1$ and $\Gamma_2$, the rooted tree $\Gamma_1\cup\Gamma_2$ is given by the trees $\Gamma_1$ and $\Gamma_2$ identified at the root.
		
	Let us revisit the trees $\Gamma_{3,4}$ and $\Gamma_{2,2}\cup \Gamma_{4,2}$; these trees depict functions from a set of order $12$ to a set of order $4$. Let $k_i$ be the number of children of the root that has $i$ children. For the tree $\Gamma_{3,4}$, $(k_0,k_1,k_2,k_3,k_4)=(0,0,0,4,0)$, and for the tree $\Gamma_{2,2}\cup \Gamma_{4,2}$ $(k_0,k_1,k_2,k_3,k_4)=(0,0,2,0,2)$. 
	In either case, and in general for the tree associated with an arbitrary morphism going from a set of order $12$ to a set of order $4$, the equations $k_1+2k_2+3k_3+4k_4=12$ and $k_0+k_1+k_2+k_3+k_4=4$ are satisfied. We get an analogous system of equations if we instead considered functions from a set of order $n$ to a set of order $m$.

	Now that we have set up the desired notation for trees and their correspondence with functions between finite sets, we can state the result for the classifying diagram of $\FinSet$.
	
	\begin{thm} \label{FinSet_Classifying_Diagram}
		The $0$th level of the classifying diagram of $\FinSet$ is given by
		\[N\left(\FinSet\right)_0\simeq \coprod_{n\in\mathbb{N}}B(\Sigma_n).\]
		And the $1$st level is described as 
		\[N \left(\FinSet\right)_1  \simeq \coprod B\left( \left( \Sigma_0 \wr \Sigma_{k_0} \right) \times \cdots \times \left(\Sigma_n \wr \Sigma_{k_n}\right) \right)\]
		where the disjoint union is over solutions to the equations
		\begin{center}
			\begin{tabular}{l c r}
				$k_1+2k_2+\cdots +nk_n=n$ & and & $k_0+k_1+\cdots +k_n=m$
			\end{tabular}
		\end{center}
		given that $n,m,k_0,\ldots,k_n$ are non negative integers.
	\end{thm}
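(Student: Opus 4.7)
The plan is to apply the stabilizer characterization from Theorem~\ref{Stabilizer_perspective} and specialize it to $\FinSet$, following the template set in Corollary~\ref{FinVect_stabilizer_perspective}. For the $0$th level, every finite set is isomorphic to a set of cardinality $n$ for some $n \in \mathbb{N}$, and the automorphism group of such a set is $\Sigma_n$. Proposition~\ref{Bergner 7.2} combined with Observation~\ref{B_commutes_with_disjoint_union} then yields $N(\FinSet)_0 \simeq \coprod_{n \in \mathbb{N}} B(\Sigma_n)$ directly.

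For the $1$st level, I would first classify morphisms $f\colon \underline{n}\to \underline{m}$ up to the action $\bullet$ of $\Sigma_n \times \Sigma_m$ from Proposition~\ref{Prop_Group_action_bullet}. Two functions $f$ and $g$ lie in the same equivalence class exactly when they have the same multiset of fiber cardinalities, since pre-composing with $\alpha \in \Sigma_n$ and post-composing with $\beta \in \Sigma_m$ can realize any relabeling of fibers together with any compatible permutation of targets. Setting $k_i$ to be the number of codomain points $y$ with $|f^{-1}(y)|=i$, the class $\langle f \rangle$ is determined by the tuple $(k_0,\ldots,k_n)$, and these tuples are precisely the non-negative integer solutions to $k_0+k_1+\cdots+k_n=m$ (the fibers partition the codomain) and $k_1+2k_2+\cdots+nk_n=n$ (the fibers partition the domain). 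This parametrizes the outer coproduct in the theorem.

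Next I would compute the stabilizer $(\Sigma_n\times\Sigma_m)_f$ of a representative $f$ of each class. A pair $(\alpha,\beta)$ lies in the stabilizer iff $\beta f \alpha^{-1}=f$, equivalently $f\alpha^{-1}=\beta^{-1}f$, meaning $\alpha$ permutes each fiber of $f$ while $\beta$ permutes codomain points within the same fiber-cardinality class in a way compatible with $\alpha$. Depicting $f$ as the rooted height-$2$ tree whose level-$1$ vertices are the codomain and whose level-$2$ vertices above each $y$ are the elements of $f^{-1}(y)$, this stabilizer is exactly the automorphism group of the tree $\bigcup_{i=0}^{n}\Gamma_{i,k_i}$ introduced above. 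Since fibers of distinct cardinality cannot be interchanged, this group factors as $\prod_{i=0}^{n}Aut(\Gamma_{i,k_i})$, which by the wreath-product description of two-level rooted trees is $\prod_{i=0}^{n}(\Sigma_i \wr \Sigma_{k_i})$. Substituting into Theorem~\ref{Stabilizer_perspective} and applying Observation~\ref{B_commutes_with_disjoint_union} to move $B(-)$ past the coproduct produces the stated formula.

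The main obstacle is the careful bookkeeping in identifying the stabilizer as the product of wreath products rather than as something more entangled. One must verify first that any stabilizing $(\alpha,\beta)$ preserves the partition of the codomain by fiber cardinality, which is immediate since $y\mapsto |f^{-1}(y)|$ is invariant under the action, and then that the restricted data on each cardinality class is genuinely independent and realizes $\Sigma_i\wr\Sigma_{k_i}$: the component of $\beta$ contributes an arbitrary permutation of the $k_i$ codomain points with fiber size $i$, while the component of $\alpha$ contributes an independent permutation within each such fiber, and the compatibility condition between $\alpha$ and $\beta$ is exactly the semidirect-product structure defining the wreath product.
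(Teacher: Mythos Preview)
Your proposal is correct and follows essentially the same route as the paper: both identify a morphism $f\colon\underline{n}\to\underline{m}$ with the rooted tree $\bigcup_i\Gamma_{i,k_i}$, read off its automorphism group as $\prod_i(\Sigma_i\wr\Sigma_{k_i})$, and parametrize the isomorphism classes by the two displayed equations. The only cosmetic difference is that you route the argument explicitly through Theorem~\ref{Stabilizer_perspective} and the action~$\bullet$, whereas the paper works directly with automorphism classes in $\iso(\FinSet^{[1]})$; since the stabilizer of $f$ under~$\bullet$ is by definition the automorphism group of $f$ in that category, the two framings are interchangeable.
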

	
	\begin{proof}
		We begin by starting with the $0$th level of the classifying diagram. The functor \[\iota:\coprod_n\Sigma_n\to\iso(\FinSet),\] which is given by sending, for each $n$, the subcategory $\Sigma_n$ to a set of order $n$ defines an equivalence of categories. Applying the nerve functor gives the desired result.

		Now we consider the $1$st level of the classifying diagram.  Note that the objects in $\iso(\FinSet^{[1]})$ are functions between finite sets, $f:A\to B$, where $A$ is a set of order $n$ and $B$ is a set of order $m$. Let $k_i$ be the number of elements in $B$ whose preimage under $f$ has cardinality $i$. The function $f:A\to B$ can be identified with the tree $\Gamma_{0,k_0}\cup \Gamma_{1,k_1} \cup \cdots \cup \Gamma_{n,k_n}$
		, which has automorphism group $\left( \Sigma_0 \wr \Sigma_{k_0} \right) \times \cdots \times \left(\Sigma_n \wr \Sigma_{k_n}\right)$. For a fixed $n$ and $m$, note that there is a bijective correspondence between automorphism classes of $f:A\to B$ and nonnegative integer solutions to the equations
		\begin{center}
			\begin{tabular}{l c r}
				$k_1+2k_2+\cdots +nk_n=n$ & and & $k_0+k_1+\cdots +k_n=m$.
			\end{tabular}
		\end{center}
		Thus for a fixed $n$ and $m$, \[\coprod B\left( \left( \Sigma_0 \wr \Sigma_{k_0} \right) \times \cdots \times \left(\Sigma_n \wr \Sigma_{k_n}\right) \right),\]
		where the disjoint union is over solutions to the above equations, captures the contributions to the first level of the classifying diagram of functions from sets of order $n$ to sets of order $m$.  Ranging over all possible $n$ and $m$ gives the desired result.
	\end{proof}

	Observe that the diagram 
	\[
	\begin{tikzcd}
	& B\left( \left( \Sigma_0 \wr \Sigma_{k_0} \right) \times \cdots \times \left(\Sigma_n \wr \Sigma_{k_n}\right) \right) \ar[dl,"d_1"'] \ar[dr,"d_0"] &\\
	B(\Sigma_n) & & B(\Sigma_m)
	\end{tikzcd}
	\]
	shows how the face maps  $d_0,d_1:N(\FinSet)_1\to N(\FinSet)_0$ interact where the equations
	\begin{center}
		\begin{tabular}{l c r}
			$k_1+2k_2+\cdots +nk_n=n$ & and & $k_0+k_1+\cdots +k_n=m$
		\end{tabular}
	\end{center}
	are satisfied.

	Let $\FinSet_{inj}$ denote the subcategory of $\FinSet$ where the objects are the same as $\FinSet$, but the morphisms are restricted to the injective functions.
	
	\begin{cor}
		The zeroth level of the classifying diagram of $\FinSet_{inj}$ is given by
		\[N\left(\FinSet_{inj}\right)_0\simeq \coprod_{n\in\mathbb{N}}B(\Sigma_n).\]
		And the first level is described as 
		\[N \left(\FinSet_{inj} \right)_1  \simeq \coprod_{n,m\in\mathbb{N}, n\leq m}B\left(  \Sigma_n\right).\]
	\end{cor}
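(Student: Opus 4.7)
The plan is to specialize Theorem \ref{FinSet_Classifying_Diagram} to the subcategory $\FinSet_{inj}$. For the $0$th level, I would first observe that a morphism in $\FinSet$ is an isomorphism precisely when it is a bijection, and every bijection is injective; hence $\iso(\FinSet_{inj}) = \iso(\FinSet)$ as categories, and the formula $N(\FinSet_{inj})_0 \simeq \coprod_n B(\Sigma_n)$ is inherited directly from the $0$th level of the theorem.

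For the $1$st level, I would classify the objects of $\iso(\FinSet_{inj}^{[1]})$ via the tree correspondence from the proof of Theorem \ref{FinSet_Classifying_Diagram}. An injection $f : A \to B$ with $|A|=n$ and $|B|=m$ has every nonempty fiber of size exactly one, so $k_i = 0$ for all $i \geq 2$ in the notation of the theorem. The defining constraints
\begin{align*}
k_1 + 2k_2 + \cdots + nk_n &= n,\\
k_0 + k_1 + \cdots + k_n &= m
\end{align*}
then force $k_1 = n$ and $k_0 = m - n$, which requires $n \leq m$ and yields a single automorphism class for each admissible pair $(n,m)$.

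The final step is to read off the automorphism group of the associated tree from the theorem, namely $(\Sigma_0 \wr \Sigma_{m-n}) \times (\Sigma_1 \wr \Sigma_n)$, and simplify using that $\Sigma_0$ and $\Sigma_1$ are trivial groups, so that each wreath product collapses onto its base symmetric group. Taking classifying spaces and indexing over all valid pairs with $n \leq m$ assembles the stated disjoint union. The main subtlety will be the bookkeeping for this wreath-product collapse and verifying that the resulting indexing agrees with the claimed form; the tree classification itself is immediate from the injectivity constraint, so no fundamentally new computation beyond Theorem \ref{FinSet_Classifying_Diagram} is required.
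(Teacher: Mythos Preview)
Your approach is exactly the paper's: specialize Theorem \ref{FinSet_Classifying_Diagram} by imposing $k_i=0$ for $i\geq 2$, solve the resulting system to get $k_1=n$ and $k_0=m-n$, and read off the automorphism group. The paper's own proof does precisely this.

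However, your final ``bookkeeping'' step does not actually land on the stated formula, and you should not expect it to. When you collapse the wreath products using that $\Sigma_0$ and $\Sigma_1$ are trivial, you obtain
\[
(\Sigma_0 \wr \Sigma_{m-n}) \times (\Sigma_1 \wr \Sigma_n) \;\cong\; \Sigma_{m-n}\times \Sigma_n,
\]
not $\Sigma_n$ alone. The factor $\Sigma_{m-n}$ records the permutations of the $m-n$ elements of $B$ outside the image of $f$, and there is no mechanism that kills it. A direct check confirms this: for the unique injection $\emptyset\hookrightarrow B$ with $|B|=m$, every bijection of $B$ is an automorphism, so the automorphism group is $\Sigma_m$, whereas the stated formula predicts $\Sigma_0$. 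Likewise for $n=1$, $m=3$ one finds $\mathrm{Aut}(f)\cong\Sigma_2$, not the trivial group.

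So the gap is not in your strategy but in the target: the statement as written drops the $\Sigma_{m-n}$ factor. Your computation (and the general theorem you are specializing) gives $B(\Sigma_n\times\Sigma_{m-n})$ for each pair $n\leq m$, and that is what the argument actually proves. The paper's proof makes the same leap without comment; you should flag the discrepancy rather than claim your collapse reaches $B(\Sigma_n)$.
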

	
	\begin{proof}
		The proof for the $0$th level is identical to Theorem \ref{FinSet_Classifying_Diagram}. So it remains to justify the description of the $1$st level. Using the same setup at the proof of \ref{FinSet_Classifying_Diagram} with the exception that we require $f:A\to B$ to be injective and hence $n\leq m$,  the function $f:A\to B$ is identified with the tree $\Gamma_{0,k_0}\cup \Gamma_{1,k_1} \cup \cdots \cup \Gamma_{n,k_n}$ where $k_i=0$ if $i\geq 2$ since $f$ is injective. Following the argument of the proof of \ref{FinSet_Classifying_Diagram},  the system of equations is simplified to just \[k_0+n=m.\] Thus for a fixed $n$ and $m$ where $n\leq m$, \[B(\Sigma_n)\] captures the contributions to the first level of the classifying diagram of injective functions from sets of order $n$ to sets of order $m$. Ranging over all possible $n$ and $m$, where $n\leq m$, gives the desired result.
	\end{proof}
	
	Let $\FinSet_{surj}$ denote the subcategory of $\FinSet$ where the objects are the same as $\FinSet$, but the morphisms are restricted to surjective functions.
	
	\begin{cor}
		The zeroth level of the classifying diagram of $\FinSet$ is given by
		\[N\left(\FinSet_{surj}\right)_0\simeq \coprod_{n\in\mathbb{N}}B(\Sigma_n).\]
		And the first level is described as 
		\[N \left(\FinSet_{surj}\right)_1  \simeq \coprod B\left( \Sigma_{k_1} \times \left( \Sigma_2 \wr \Sigma_{k_2} \right) \times \cdots \times \left(\Sigma_n \wr \Sigma_{k_n}\right) \right)\]
		where the disjoint union is over solutions to the equations
		\begin{center}
			\begin{tabular}{l c r}
				$k_1+2k_2+\cdots +nk_n=n$ & and & $k_1+\cdots +k_n=m$
			\end{tabular}
		\end{center}
		given that $n,m,k_0,\ldots,k_n$ are non negative integers and $n\geq m$.
	\end{cor}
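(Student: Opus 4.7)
The plan is to follow the same strategy as the preceding corollary for $\FinSet_{inj}$, simply specializing the proof of Theorem \ref{FinSet_Classifying_Diagram} to the surjective setting. For the $0$th level, I would note that every isomorphism in $\FinSet$ is in particular a surjection, so $\iso(\FinSet_{surj}) = \iso(\FinSet)$ and therefore the description of $N(\FinSet_{surj})_0$ is identical to that of $N(\FinSet)_0$ given in Theorem \ref{FinSet_Classifying_Diagram}.

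For the $1$st level, I would reuse the tree encoding of $\iso(\FinSet^{[1]})$: an object is a function $f: A \to B$ with $|A|=n$ and $|B|=m$, which corresponds to the rooted tree $\Gamma_{0,k_0} \cup \Gamma_{1,k_1} \cup \cdots \cup \Gamma_{n,k_n}$, where $k_i$ counts elements of $B$ whose preimage has cardinality $i$. The automorphism group of this tree, and hence of $f$ under the action $\bullet$ from Proposition \ref{Prop_Group_action_bullet}, is $(\Sigma_0 \wr \Sigma_{k_0}) \times \cdots \times (\Sigma_n \wr \Sigma_{k_n})$.

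The key observation is that surjectivity of $f$ forces every fiber to be nonempty, so $k_0 = 0$ and the factor $\Sigma_0 \wr \Sigma_{k_0}$ disappears from the product. Since $\Sigma_1 \wr \Sigma_{k_1} \cong \Sigma_{k_1}$, the stabilizer simplifies to $\Sigma_{k_1} \times (\Sigma_2 \wr \Sigma_{k_2}) \times \cdots \times (\Sigma_n \wr \Sigma_{k_n})$. The counting equations $k_1 + 2k_2 + \cdots + nk_n = n$ and $k_0 + k_1 + \cdots + k_n = m$ reduce to $k_1 + 2k_2 + \cdots + nk_n = n$ and $k_1 + k_2 + \cdots + k_n = m$, with the side condition $n \geq m$ needed for any surjection $A \to B$ to exist. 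Invoking Theorem \ref{Stabilizer_perspective} together with Observation \ref{B_commutes_with_disjoint_union} assembles these pieces into the claimed disjoint union of classifying spaces.

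There is no genuine obstacle in this argument, since the heavy lifting was done in Theorem \ref{FinSet_Classifying_Diagram}. The only point requiring care is keeping track of which indices drop out versus which merely simplify: the $k_0$-factor vanishes entirely because there are no fibers of size zero, whereas the $k_1$-factor remains but loses its wreath structure since $\Sigma_1$ is trivial. Both counting equations must be rewritten accordingly, and the inequality $n\geq m$ must be imposed.
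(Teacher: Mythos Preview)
Your proposal is correct and follows essentially the same approach as the paper: specialize the tree encoding from Theorem \ref{FinSet_Classifying_Diagram} by imposing surjectivity, which forces $k_0=0$ and $n\geq m$, and then read off the simplified automorphism groups and counting equations. If anything, you supply slightly more detail than the paper does, in particular the explicit justification that $\iso(\FinSet_{surj})=\iso(\FinSet)$ and the observation that $\Sigma_1\wr\Sigma_{k_1}\cong\Sigma_{k_1}$.
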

	
	\begin{proof}
		The proof of the $0$th level is identical to Theorem \ref{FinSet_Classifying_Diagram}. To get the desired result for the $1$st level, we implement the same setup as the proof of \ref{FinSet_Classifying_Diagram} with the exception that we require $f:A\to B$ to be a surjective function. Since $f$ is surjective, $n\geq m$. Hence $f:A\to B$ is identified with the tree $\Gamma_{0,k_0}\cup \Gamma_{1,k_1} \cup \cdots \cup \Gamma_{n,k_n}$ where $k_0=0$ since $f$ is surjective. Following the same argument of the proof of \ref{FinSet_Classifying_Diagram} with the added restrictions that $k_0=0$ and $n\geq m$ gives the desired result.
	\end{proof}

	\appendix


\end{document}